\def\R{{\mathbb R}}
\def\N{{\mathbb N}}
\definecolor{rojo}{rgb}{1,0,0}
\definecolor{azul}{rgb}{0,0,1}
\def\b{\beta}
\def\d{\delta}
\def\e{varepsilon}
\def\g{\gamma}
\def\s{\sigma}
\def\t{\theta}
\def\l{\lambda}
\def\p{\partial}
\def\O{\Omega}
\def\o{\omega}
\def\e{\varepsilon}
\def\v{\varphi}
\def\G{\Gamma}
\def\k{\kappa}
\def\mc{\mathcal}
\def\mf{\mathfrak}
\def\ua{\uparrow}
\numberwithin{equation}{section}
\theoremstyle{definition}
\theoremstyle{plain}
\newtheorem{theorem}{Theorem}[section]
\newtheorem{proposition}{Proposition}[section]
\theoremstyle{definition}
\begin{document}

\title[The weighted periodic-parabolic degenerate logistic equation]
{The weighted periodic-parabolic degenerate logistic equation}

\author{D. Aleja}
\address{Departamento de Matem\'{a}tica Aplicada, Ciencia e Ingenier\'{\i}a de los Materiales y Tecnolog\'{\i}a Electr\'{o}nica, Universidad Rey Juan Carlos, M\'ostoles, Spain}
\email{david.aleja@urjc.es}

\author{I. Ant\'on}
\address{Departamento de An\'alisis Matem\'atico y Matem\'atica Aplicada,
Universidad Complutense de Madrid,
Madrid 28040, Spain}
\email{iantonlo@ucm.es}

\author{J. L\'opez-G\'omez}
\address{Instituto de Matem\'atica Interdisciplinar (IMI),
Departamento de An\'alisis Matem\'atico y Matem\'atica Aplicada,
Universidad Complutense de Madrid,
Madrid 28040, Spain}
\email{Lopez\_Gomez@mat.ucm.es}

\maketitle

\begin{abstract}
The main goal of this paper is twofold. First, it characterizes the existence of positive periodic solutions for a general class of weighted periodic-parabolic logistic problems of degenerate type (see \eqref{1.1}). This result provides us with is a substantial generalization of Theorem 1.1 of Daners and L\'{o}pez-G\'{o}mez \cite{DLGA} even for the elliptic counterpart of \eqref{1.1}, and of some previous findings of the authors in \cite{AALG} and \cite{AALG2}. Then, it sharpens some results  of \cite{LG20} by enlarging the class of critical degenerate weight functions for which \eqref{1.1} admits a positive periodic solution in  an unbounded interval of values of the parameter $\l$.  The latest findings, not having any previous elliptic counterpart, are utterly new and of great interest in Population Dynamics.

\vspace{0.1cm}

\noindent \emph{2010 MSC:}   35K55, 35K57, 35B09, 35B10, 35Q92.
\vspace{0.1cm}

\noindent \emph{Keywords and phrases: Periodic-Parabolic logistic equation, positive periodic solutions, degenerate problem, weighted problem. }
\vspace{0.1cm}

\noindent This paper has been supported by the IMI of Complutense University and the Ministry of
Science and Innovation of Spain under Grant PGC2018-097104-B-IOO.
\end{abstract}

\vspace{0.2cm}

\section{Introduction}

\noindent The main goal of this paper is to characterize the existence of classical positive periodic solutions, $(\l,u)$, of the periodic-parabolic problem
\begin{equation}
\label{1.1}
  \left\{ \begin{array}{ll} \p_t u +\mf{L} u  = \l m(x,t) u-a(x,t) f(u)u & \quad \hbox{in}\;\; Q_T:=\O\times (0,T), \\[1ex] \mc{B}u =0  & \quad \hbox{on}\;\; \p\O\times [0,T],\\  \end{array} \right.
\end{equation}
where:
\begin{itemize}
\item[($H_\Omega$)] $\O$ is a bounded subdomain (open and connected subset) of $\R^N$, $N\geq 1$, of class $\mathcal{C}^{2+\t}$ for some $0<\t\leq 1$.
\item[($H_\mf{L}$)] $\mf{L}$ is a non-autonomous differential operator of the form
\begin{equation}
\label{1.2}
  \mathfrak{L}\equiv \mathfrak{L}(x,t):= -\sum_{i,j=1}^N a_{ij}(x,t)\frac{\p^2}
  {\p x_i \p x_j}+\sum_{j=1}^N b_j(x,t) \frac{\p}{\p x_j}+c(x,t)
\end{equation}
with $a_{ij}=a_{ji}, b_j, c \in F$ for all $1\leq i, j\leq N$ and some $T>0$, where
\begin{equation}
\label{1.3}
  F:= \left\{u\in \mathcal{C}^{\t,\frac{\t}{2}}(\bar\O\times \R;\R)\;: \;
  u(\cdot,T+t)=u(\cdot,t) \;\; \hbox{for all}\; t \in\R \right\}.
\end{equation}
Moreover, $\mathfrak{L}$ is assumed to be uniformly elliptic in
$\bar Q_T=\bar \O \times [0,T]$, i.e., there exists a constant $\mu>0$ such that
\begin{equation*}
  \sum_{i,j=1}^N a_{ij}(x,t) \xi_i\xi_j\geq \mu\, |\xi|^2\quad \hbox{for all} \;\; (x,t,\xi)\in \bar Q_T\times \R^N,
\end{equation*}
where $|\cdot|$ stands for the  Euclidean norm of $\R^N$.
\item[($H_\mc{B}$)] $\mc{B}$ is a non-classical mixed boundary operator of the form
\begin{equation}
\label{1.4}
    \mc{B} \xi := \left\{ \begin{array}{ll}
    \xi \qquad & \hbox{on } \;\;\G_0, \\     \p_{\nu} \xi + \b(x) \xi \qquad &
   \hbox{on } \;\; \G_1,  \end{array} \right.\qquad \xi \in \mathcal{C}(\G_0)\oplus\mathcal{C}^1(\O\cup \G_1),
\end{equation}
where $\G_0$ and $\G_1$ are two disjoint open and closed subsets of $\partial \O$ such that $\partial \O:= \G_0\cup \G_1$. In \eqref{1.4}, $\b\in \mathcal{C}^{1+\t}(\G_1)$ and $\nu =(\nu_1,...,\nu_N)\in \mathcal{C}^{1+\t}(\p\O;\R^N)$ is an outward pointing nowhere tangent vector field.
\item[($H_{a}$)] $m,a\in F$ and $a$ satisfies $a\gneq0$, i.e., $a\geq 0$ and $a\neq0$.
\item[($H_f$)] $f\in \mc{C}^1(\R;\R)$ satisfies $f(0)=0$, $f'(u)>0$ for all $u>0$, and
\begin{equation*}
  \lim_{u\to +\infty}f(u)=+\infty.
\end{equation*}
\end{itemize}
Thus, we are working under the general setting of \cite{ALGR,ALGN}, where  $\b$ can change sign, which remains outside the  classical framework  of Hess \cite{Hess} and Daners and L\'{o}pez-G\'{o}mez \cite{DLG}, where $\b \geq 0$.  In this paper, $\mc{B}$ is the \emph{Dirichlet boundary operator} on $\G_0$, and the \emph{Neumann}, or a \emph{first order regular oblique derivative boundary operator}, on $\G_1$, and either $\G_0$, or $\G_1$, can be empty. When $\G_1=\emptyset$, $\mc{B}$ becomes the Dirichlet boundary operator, denoted by $\mc{D}$. Thus, our results are  sufficiently general as to cover the existence results of Du and Peng \cite{DuPeng12,DuPeng13}, where $m(x,t)$ was assumed to be constant, $\G_0= \emptyset $ and $\b =0$, and of Peng and Zhao \cite{PZ}, where $\b\geq 0$.  The importance of combining temporal periodic heterogeneities in spatially heterogeneous models can be easily realized by simply having a look at the Preface of \cite{LG15}.  
\par
By a classical solution of \eqref{1.1} we mean a solution pair $(\l,u)$, or simply $u$, with $u\in E$, where
\begin{equation*}
  E:= \left\{u\in \mathcal{C}^{2+\t,1+\frac{\t}{2}}(\bar\O\times \R;\R)\;: \;  u(\cdot,T+t)=u(\cdot,t) \;\; \hbox{for all}\;\; t \in\R \right\}.
\end{equation*}
The first  goal of this paper is to obtain a periodic-parabolic counterpart of \cite[Th. 1.1]{DLGA} to characterize the existence of classical positive periodic-solutions of \eqref{1.1}, regardless
the structure of $a^{-1}$.  Our main result generalizes, very substantially, our previous findings in \cite{DLGA}, where we dealt with the special case  $m\equiv 1$, and \cite{AALG2}, where we imposed some (severe) restrictions on the structure of $a^{-1}(0)$. All previous requirements are removed in this paper. To state our result, we need to introduce some of notation.
\par
Throughout this paper, $\mc{P}$ stands for the periodic-parabolic operator
\begin{equation*}
  \mathcal{P}:= \p_t + \mathfrak{L},
\end{equation*}
and, for every tern $(\mathcal{P},\mc{B},Q_T)$ satisfying the previous general assumptions, we denote by $\sigma[\mathcal{P},\mc{B},Q_T]$ the principal eigenvalue of $(\mathcal{P},\mc{B},Q_T)$, i.e., the unique value of $\tau$ for which the linear  periodic-parabolic eigenvalue problem
\begin{equation*}
  \left\{ \begin{array}{ll}  \mathcal{P}\varphi  = \tau \varphi & \quad \hbox{in}\;\; Q_T, \\[1ex] \mc{B}\varphi =0  & \quad \hbox{on}\;\; \p\O \times [0,T],\\  \end{array} \right.
\end{equation*}
admits a positive eigenfunction $\varphi\in E$. In such case, $\v$ is unique, up to a positive
 multiplicative constant, and $\varphi\gg 0$ in the sense that, for every $t\in [0,T]$,
\begin{equation*}
  \v(x,t)>0 \;\; \hbox{for all}\;\; x \in \O\cup\G_1\;\;\hbox{and}\;\;
  \p_{\nu} \v(x,t)<0 \;\;\hbox{if}\;\; x \in \G_0.
\end{equation*}
The existence of the principal eigenvalue in the special case when $\beta\geq 0$ is a classical result attributable to Beltramo and Hess \cite{BH}. In the general case when $\beta$ changes of sign this result has been  established, very recently, in \cite{ALGR,ALGN}. Throughout this paper, we will also consider  the principal eigenvalue
\begin{equation}
\label{1.5}
\Sigma(\lambda,\gamma):=\sigma[\mathcal{P}-\lambda m+\gamma a, \mc{B}, Q_T],\qquad \l,\,\gamma\in\R.
\end{equation}
Since $a\gneq 0$, by Proposition \ref{pr21}(a), $\Sigma(\l,\g)$ is an increasing function of $\g$ and hence, for every $\l\in\R$, the limit
\begin{equation}
\label{1.6}
\Sigma(\l,\infty):=\lim_{\g\ua \infty}\Sigma(\l,\g)\leq \infty
\end{equation}
is well defined. The first aim of this paper is to establish the following characterization.

\begin{theorem}
\label{th11}
Suppose ($H_\Omega$), ($H_\mf{L}$), ($H_\mc{B}$), ($H_{a}$) and ($H_f$). Then, \eqref{1.1} admits a
positive solution if, and only if,
\begin{equation}
\label{1.7}
  \Sigma(\l,0)<0<\Sigma(\l,\infty).
\end{equation}
Moreover, it is unique, if it exists.
\end{theorem}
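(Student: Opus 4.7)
My plan is to split the proof into three parts: uniqueness, necessity of \eqref{1.7}, and sufficiency. Uniqueness is standard for logistic-type problems: exploiting the strict monotonicity of $f$, the periodic-parabolic maximum principle, and a sliding argument (comparing $\varepsilon u_1$ for $\varepsilon<1$ with $u_2$ as $\varepsilon\uparrow 1$), one obtains $u_1 = u_2$ for any two positive periodic solutions.

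For the \textbf{necessity} of \eqref{1.7}, suppose $u\gg 0$ solves \eqref{1.1}. Rewriting the equation as
\[
  \bigl[\mathcal{P}-\lambda m + a f(u)\bigr] u = 0,\qquad \mathcal{B}u=0,
\]
identifies $u$ as a positive eigenfunction, so $\sigma[\mathcal{P}-\lambda m + a f(u),\mathcal{B},Q_T]=0$. Since $af(u)\gneq 0$, the strict monotonicity of the principal eigenvalue in the potential (Proposition \ref{pr21}(a)) immediately yields $\Sigma(\lambda,0)<0$. Setting $M:=\max_{\bar Q_T} f(u)$, for any $\gamma>M$ one has the strict pointwise inequality $\gamma a > af(u)$ on $\{a>0\}$ (a set of positive measure since $a\gneq 0$), so the same proposition gives $\Sigma(\lambda,\gamma)>0$, whence $\Sigma(\lambda,\infty)>0$.

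For \textbf{sufficiency}, I would construct an ordered pair of sub- and supersolutions in $E$ and invoke the standard periodic-parabolic monotone iteration scheme. The subsolution is $\underline{u}=\varepsilon\varphi$, where $\varphi\gg 0$ is the principal eigenfunction of $\mathcal{P}-\lambda m$ associated to $\Sigma(\lambda,0)<0$: a direct computation gives
\[
  \mathcal{P}(\varepsilon\varphi)-\lambda m(\varepsilon\varphi)+ a f(\varepsilon\varphi)(\varepsilon\varphi) = \varepsilon\varphi\bigl[\Sigma(\lambda,0)+a f(\varepsilon\varphi)\bigr]\leq 0
\]
for $\varepsilon>0$ small, by continuity and $f(0)=0$.

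The supersolution is the main obstacle, since constants fail whenever $a^{-1}(0)\neq\emptyset$. Using $\Sigma(\lambda,\infty)>0$, I would select $\gamma$ with $\Sigma(\lambda,\gamma)>0$ and let $\psi_\gamma\gg 0$ denote its associated principal eigenfunction of $\mathcal{P}-\lambda m+\gamma a$; the natural candidate $\bar u=K\psi_\gamma$ yields
\[
  \mathcal{P}\bar u-\lambda m\bar u + a f(\bar u)\bar u = K\psi_\gamma\bigl[\Sigma(\lambda,\gamma) + a\bigl(f(K\psi_\gamma)-\gamma\bigr)\bigr],
\]
so supersolutionhood reduces to nonnegativity of the bracket. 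The hard part is ensuring this simultaneously near the Dirichlet boundary $\Gamma_0$ (where $\psi_\gamma$ vanishes, so $f(K\psi_\gamma)$ is small) and on the interior region $\{a>0\}$ away from $\Gamma_0$ (where $f(K\psi_\gamma)$ must overcome $\gamma$). This patching requires a careful choice of $\gamma$ exploiting the full strength of $\Sigma(\lambda,\infty)>0$, possibly complemented by a localized constant or Dirichlet-type supersolution in a neighbourhood of $\Gamma_0$, following the strategy of \cite{DLGA}. Once the ordering $\varepsilon\varphi\leq K\psi_\gamma$ is secured by shrinking $\varepsilon$, monotone iteration delivers a classical positive periodic solution in $E$, and uniqueness follows from the first step.
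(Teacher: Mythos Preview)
Your treatment of necessity and of the subsolution $\underline u=\varepsilon\varphi$ matches the paper almost verbatim, and the uniqueness argument is standard (the paper simply cites Theorem~\ref{th22}).

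The supersolution construction, however, is not a proof: you have correctly located the obstruction but not removed it. With $\psi_\gamma$ the principal eigenfunction on the \emph{original} domain $\Omega$, one has $\psi_\gamma=0$ on $\Gamma_0$. If $a>0$ somewhere on $\Gamma_0\times[0,T]$---which is allowed, since Theorem~\ref{th11} assumes only $a\gneq 0$---then in any tubular neighbourhood of that portion of $\Gamma_0$ the bracket
\[
\Sigma(\lambda,\gamma)+a\bigl(f(K\psi_\gamma)-\gamma\bigr)
\]
is forced negative for every fixed $K$, because $f(K\psi_\gamma)\to 0$ while $a\gamma$ stays bounded away from zero. No choice of $\gamma$ alone cures this, since enlarging $\gamma$ only worsens the defect near $\Gamma_0$. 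Your suggestion of ``patching'' with a local barrier is left unspecified, and in the periodic-parabolic setting with a general mixed operator $\mathcal B$ it is not obvious how to glue a local construction to $K\psi_\gamma$ so that the result lies in $E$ and is a global supersolution.

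The paper bypasses the patching issue entirely by a domain-perturbation device (Section~4 and Theorem~\ref{th41}): one enlarges $\Omega$ to $\mathcal O_n=\Omega\cup\{\,\mathrm{dist}(x,\Gamma_0)<1/n\,\}$, extends the coefficients, and imposes Dirichlet conditions on the outer boundary $\Gamma_{0,n}$. By continuity of the principal eigenvalue under this perturbation, $\Sigma_n(\lambda,\gamma)>0$ for $n$ large. The corresponding eigenfunction $\psi_n$ is now \emph{strictly positive on all of $\bar\Omega$}, in particular bounded below by some $\mu>0$ there. Then $\overline u=\kappa\psi_n$ satisfies $f(\kappa\psi_n)\geq f(\kappa\mu)>\gamma$ throughout $\bar Q_T$ for large $\kappa$, so the bracket is everywhere positive; on $\Gamma_0$ one has $\overline u>0$, which is exactly the supersolution inequality for the Dirichlet condition. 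This is the missing idea in your proposal.
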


As Theorem \ref{th11} holds regardless the structure of the compact set $a^{-1}(0)$,
it provides us with a substantial extension of \cite[Th. 1.1]{DLGA}, which was established for the elliptic counterpart of \eqref{1.1},  and some of the findings of the authors in \cite{AALG} and \cite{AALG2}, where some additional (severe) restrictions on the structure of $a^{-1}(0)$  were imposed.
\par
From the point of view of the applications, determining whether or not $\Sigma(\l,\infty)=\infty$ occurs, is imperative to ascertain the hidden structure of the set of $\l\in\R$ for which
\eqref{1.7} holds. Thanks to Theorem \ref{th11}, this is the set of $\l$'s for which
\eqref{1.1} possesses a positive periodic solution. To illustrate what we mean, suppose that $a(x,t)$ is positive and separated away from zero in $Q_T$, i.e.,
$$
  a(x,t)\geq \o>0 \quad \hbox{for all}\;\; (x,t)\in Q_T.
$$
Then, by Proposition \ref{pr21}(a), it is apparent that
$$
  \Sigma(\lambda,\gamma):=\sigma[\mathcal{P}-\lambda m+\gamma a, \mc{B}, Q_T]\geq
  \sigma[\mathcal{P}-\lambda m, \mc{B}, Q_T]+\g \o
$$
and hence, for every $\l\in\R$,
$$
  \Sigma(\l,\infty)=\lim_{\g\ua \infty}\Sigma(\l,\g)=\infty.
$$
Therefore, in this case, \eqref{1.1} has a positive solution if, and only if,
$\Sigma(\l,0)<0$. Naturally, to determine the structure of the set of $\l$'s where
$$
  \Sigma(\l,0)=\sigma[\mathcal{P}-\lambda m, \mc{B}, Q_T]<0
$$
the nodal properties of $m(x,t)$ play a pivotal role. Indeed, whenever $m\neq 0$ has constant sign, $\Sigma(\l,0)<0$ on some $\l$-interval which might become $\R$. However, when
\begin{equation}
\label{1.8}
\int_0^T \min_{x\in \bar \O}m(x,t)\,dt < 0 < \int_0^T \max_{x\in \bar\O}m(x,t)\,dt
\end{equation}
and $\Sigma(\tilde \l,0)>0$ for some $\tilde \l\in\R$, then
$$
   \{\l\in\R\;:\;\Sigma(\l,0)<0\}= (-\infty,\l_-)\cup(\l_+,\infty),
$$
where $\l_\pm$ stand for the
principal eigenvalues of the linear weighted eigenvalue problem
\begin{equation*}
  \left\{ \begin{array}{ll}  \mathcal{P}\varphi  = \lambda m(x,t) \varphi & \quad \hbox{in}\;\; Q_T, \\[1ex] \mc{B}\varphi =0  & \quad \hbox{on}\;\; \p\O \times [0,T].\\  \end{array} \right.
\end{equation*}
A complete analysis of this particular issue was carried out in \cite{ALGR} and \cite{ALGN}. So, we stope this discussion here.
\par
Contrarily to what happens in the autonomous case, $\Sigma(\l,\infty)$ can be infinity if
$a^{-1}(0)$  has non-empty interior in $Q_T$. The next result characterizes whether $\Sigma(\l,\infty)<\infty$ holds for a special family of
weight functions $a(x,t)$ with a tubular vanishing set, $a^{-1}(0)$, as described on \cite[Th. 6.4]{LG20}. This result generalized, very substantially, some pioneering findings of Daners and Thornet \cite{DT}.

\begin{theorem}
\label{th12}
$\Sigma(\l,\infty)<\infty$ if, and only if, there is a continuous map $\tau :[0,T]\to \O$ such that $\tau(0)=\tau(T)$ and
\begin{equation*}
  (\tau(t),t)\in \mathrm{int\,} a^{-1}(0)\quad \hbox{for all}\;\; t \in [0,T],
\end{equation*}
i.e., if we can advance, upwards in time, from time $t=0$ up to time $t=T$  within the interior of the vanishing set of $a(x,t)$.
\end{theorem}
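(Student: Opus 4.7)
The plan is to prove both implications by exploiting the monotonicity of the principal eigenvalue with respect to the potential and the concentration phenomenon of the principal eigenfunctions as $\gamma\uparrow\infty$. For the sufficiency I would construct a suitable ``moving'' subcylinder inside $\mathrm{int}\,a^{-1}(0)$ to bound $\Sigma(\lambda,\gamma)$ uniformly in $\gamma$; for the necessity I would extract a continuous $T$-periodic selection from the concentration set of $\varphi_\gamma$.

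For the ``if'' direction, suppose such a $\tau$ exists. By convolving $\tau$ with a smooth $T$-periodic mollifier one obtains $\tilde\tau\in \mathcal{C}^\infty(\R;\Omega)$, $T$-periodic and arbitrarily close to $\tau$; using compactness of $[0,T]$ together with openness of $\mathrm{int}\,a^{-1}(0)$, one finds $r>0$ so small that the moving ball $B(\tilde\tau(t),r)$ lies in $a^{-1}(0)$ for every $t\in[0,T]$. Set
\begin{equation*}
  D:=\bigl\{(x,t)\in Q_T\;:\;|x-\tilde\tau(t)|<r\bigr\}.
\end{equation*}
The change of variables $y:=x-\tilde\tau(t)$ turns the periodic-parabolic eigenvalue problem on $D$ with Dirichlet data on the lateral boundary into a standard problem on the cylinder $B(0,r)\times[0,T]$, whose operator differs from $\mathcal{P}$ only by a $T$-periodic first-order drift term coming from $\tilde\tau'(t)$; hence its principal eigenvalue $\sigma^*$ is well defined and finite. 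Since $a\equiv 0$ on $D$, monotonicity of the principal eigenvalue with respect to subdomains and potentials (Proposition \ref{pr21}(a)) yields $\Sigma(\lambda,\gamma)\leq\sigma^*$ for every $\gamma\geq 0$, and letting $\gamma\uparrow\infty$ gives $\Sigma(\lambda,\infty)\leq\sigma^*<\infty$.

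For the converse, suppose $\Sigma(\lambda,\infty)<\infty$ and let $\varphi_n\in E$, $\|\varphi_n\|_\infty=1$, be the positive principal eigenfunction associated with $\Sigma(\lambda,\gamma_n)$ for some $\gamma_n\uparrow\infty$. Testing the eigenvalue equation against $\varphi_n$ and integrating over $Q_T$ gives
\begin{equation*}
  \gamma_n\int_{Q_T}a\,\varphi_n^2\,dx\,dt\leq C,
\end{equation*}
so $\varphi_n$ concentrates on $a^{-1}(0)$. Parabolic Schauder estimates applied on compact subsets of $Q_T\setminus\mathrm{supp}\,a$, combined with the $T$-periodicity and Arzel\`a--Ascoli, produce a subsequence converging locally uniformly to a nonnegative $T$-periodic function $\varphi$ supported in $a^{-1}(0)$; the normalization together with the strong parabolic maximum principle forces $\{\varphi>0\}$ to be a nonempty, open, $T$-periodic subset of $\mathrm{int}\,a^{-1}(0)$. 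A continuous selection $\tau(t)$ with $(\tau(t),t)\in\{\varphi>0\}$ for every $t$, and with $\tau(0)=\tau(T)$, is then extracted by combining the tubular structure of $a^{-1}(0)$ provided by \cite[Th.~6.4]{LG20} with a propagation argument in time based on the strong maximum principle.

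The main obstacle is the necessary direction, and specifically the step of extracting a \emph{continuous} $T$-periodic curve $\tau$ from the concentration set of the $\varphi_n$. Showing that the limit is supported in $a^{-1}(0)$ is a routine energy estimate, but a priori $\{\varphi>0\}$ could admit several connected components and its time slices could ``jump'' between them; ruling this out, and hence constructing a continuous loop, relies crucially on the tubular structure of $a^{-1}(0)$ described in \cite[Th.~6.4]{LG20} together with the periodicity constraint $\varphi(\cdot,0)=\varphi(\cdot,T)$, which closes the path.
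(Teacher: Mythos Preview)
Your ``if'' direction is essentially the standard argument and matches what the paper refers to (the device from Lemma~15.4 of Hess, quoted in \cite{LG20}): straighten the moving ball by the change of variables $y=x-\tilde\tau(t)$, obtain a genuine product cylinder on which $a\equiv 0$, and use domain monotonicity. One small point: you cite Proposition~\ref{pr21}(a), but what you actually need is the subdomain monotonicity of part~(b), applied after the change of variables so that the comparison is between two product cylinders with the same (drifted) operator.

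The converse is where your proposal diverges from the paper and where there is a genuine gap. The paper does not construct~$\tau$ out of a limit of eigenfunctions. Instead (see the discussion after Theorem~\ref{th12} and the machinery of Section~6, Theorems~\ref{th61}--\ref{th62}) it argues by contradiction: if $\Sigma(\lambda,\infty)<\infty$, then $\varphi_\gamma\to 0$ uniformly on compact subsets of $[a>0]$; the non-existence of a $T$-periodic curve in $\mathrm{int}\,a^{-1}(0)$ means that, advancing upwards in time, one is forced to cross a region with lateral boundary inside $[a>0]$, and the parabolic weak maximum principle (Theorem~\ref{th61}) then propagates the decay across that barrier. Iterating over the tubular pieces one eventually gets $\varphi_\gamma\to 0$ uniformly on $\bar Q_T$, contradicting $\|\varphi_\gamma\|_\infty=1$. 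The geometric input ``no $\tau$ exists'' is used \emph{directly} to build the chain of regions with good lateral boundary, not to read off a curve from a limit set.

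In your approach two steps are not justified. First, the claim that the limit $\varphi$ is nontrivial: your convergence is only locally uniform on compact subsets of $\mathrm{int}\,a^{-1}(0)$, while the points where $\varphi_{\gamma_n}$ attains its maximum may drift to $\partial(\mathrm{int}\,a^{-1}(0))$, so the normalization $\|\varphi_{\gamma_n}\|_\infty=1$ need not survive in the limit. (Your energy bound $\gamma_n\!\int a\varphi_n^2\le C$ also tacitly uses an integration by parts that requires more regularity of the $a_{ij}$ than the standing hypothesis $a_{ij}\in\mathcal C^{\theta,\theta/2}$.) Second, and more seriously, even granting a nontrivial, $T$-periodic, open set $\{\varphi>0\}\subset\mathrm{int}\,a^{-1}(0)$, you do not actually produce a continuous $T$-periodic selection $t\mapsto\tau(t)$: you acknowledge this as ``the main obstacle'' and defer to the tubular structure of \cite{LG20}, but give no mechanism that prevents the time-slices of $\{\varphi>0\}$ from jumping between components. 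The paper's propagation argument sidesteps this entirely; if you want to salvage your route you would need a genuine connectedness/selection lemma here, and none is provided.
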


Actually, the fact that $\Sigma(\l,\infty)<\infty$ if $\tau$ exists is a general
feature not depending on the particular structure of $a^{-1}(0)$. In the proof of \cite[Th. 4.1]{LG20}
it was derived from a technical device introduced in the proof of Lemma 15.4 of Hess \cite{Hess}. The converse result that $\Sigma(\l,\infty)=\infty$ when the map $\tau$ does not exist, has been already proven  when $a^{-1}(0)$ has the structure sketched in Figure \ref{Fig1}, thought it remained an open problem
in \cite{LG20} for more general situations. In Figure \ref{Fig1}, as in all remaining figures of this paper, the $x$-variables have been plotted in abscissas, and time $t$ in ordinates. The white region
represents $a^{-1}(0)$, while the dark one is the set of $(x,t)\in Q_T$ such that $a(x,t)>0$.

\begin{figure}[h!]
\centering
\includegraphics[scale=0.8]{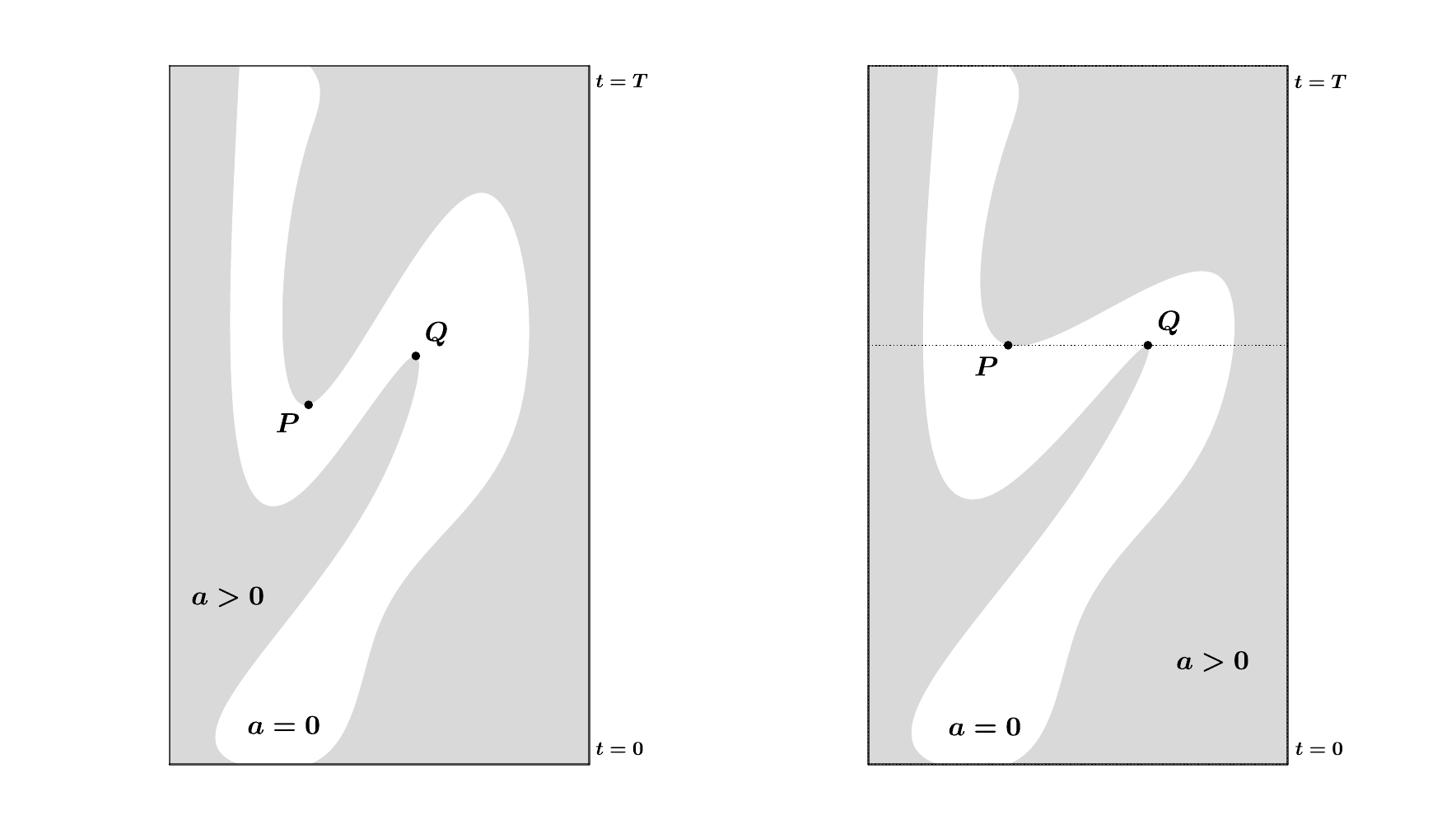}
\vspace{-0.5cm}
\caption{Two simple situations where $\Sigma(\l,\infty)=\infty$; the second one is critical. }
\label{Fig1}
\end{figure}

In  Figure \ref{Fig1}, we are denoting $P=(x_P,t_P)$ and $Q=(x_Q,t_Q)$. The first picture shows
a genuine situation where $t_P<t_Q$, while $t_P=t_Q$ in the second one, which is a limiting case with respect to the complementary case when $t_P>t_Q$, where, due to Theorem \ref{th12}, $\Sigma(\l,\infty)<\infty$. All pictures throughout this paper idealize the general case when $N\geq 1$, though represent some special cases with $N=1$.
\par
Subsequently, as sketched by Figure \ref{Fig1}, we are assuming
\begin{equation}
\label{1.9}
  a(x,t)>0\quad \hbox{for all}\;\; (x,t)\in \p\O \times [0,T].
\end{equation}
Our second goal in this paper is to extend Theorem \ref{th12} to cover a more general class of critical weight functions $a(x,t)$ satisfying \eqref{1.9} for which the map $\tau$ does not exist. These cases are critical in the sense that some small perturbation of $a(x,t)$ might, or might not, entail the existence of $\tau$. To precise what we mean, let $a_0\in F$ be an admissible weight function such that
$\mathrm{int\,}a_0^{-1}(0)$ has the structure of the first plot of Figure \ref{Fig2} with $t_Q<t_P$. In such case, by Theorem \ref{th12}, $\Sigma(\l,\infty)<\infty$ for all $\l\in\R$.

\begin{figure}[h!]
\centering\includegraphics[scale=0.8]{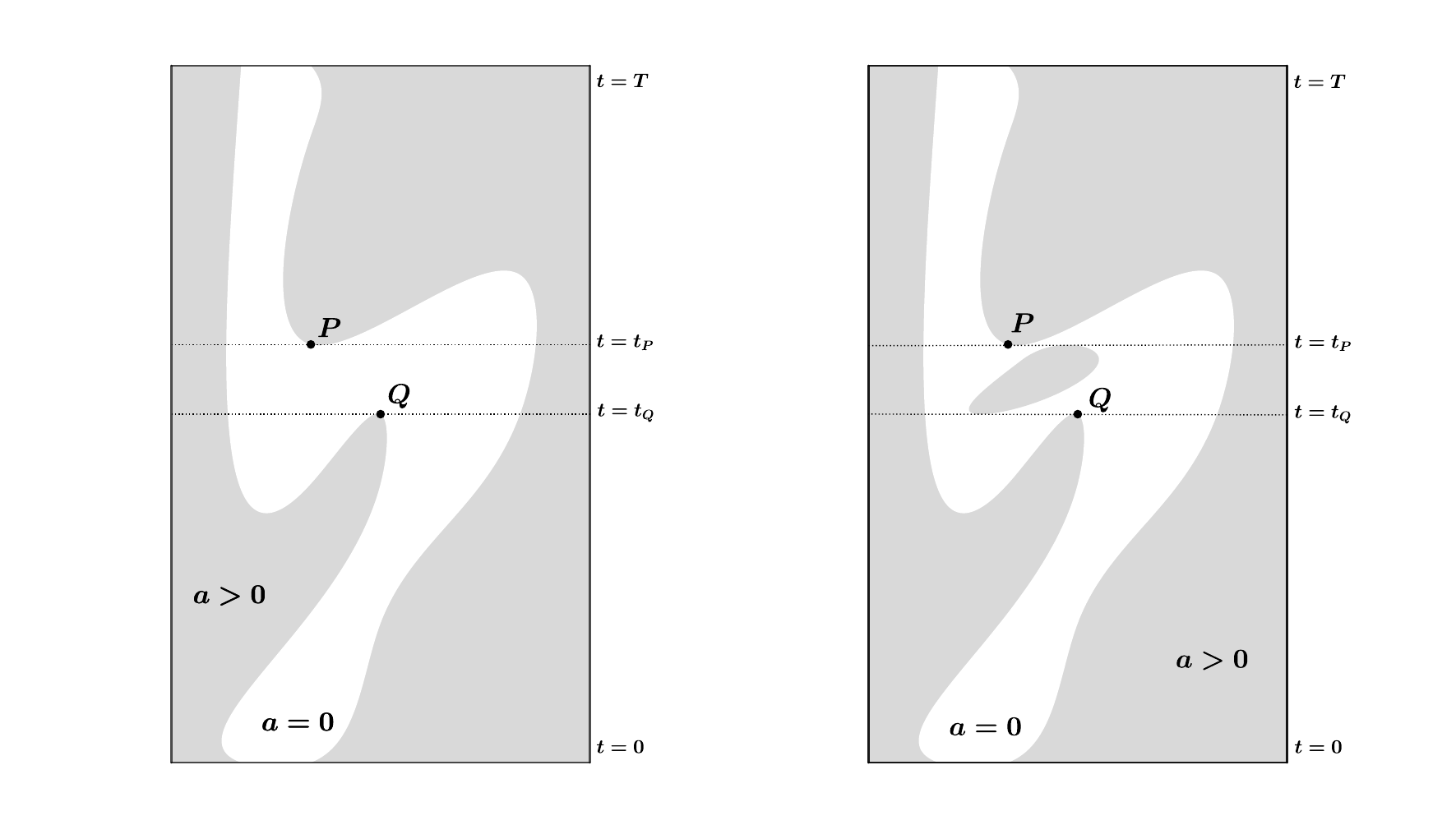}
\caption{The construction of the weight function $a=a_0+a_1$.}
\label{Fig2}
\end{figure}

Now, let $\mc{O}$ be an open and connected subset of $\mathrm{int\,}a_0^{-1}(0)$, with sufficiently smooth boundary, such that
$$
  M:=\bar{\mc{O}}\subset \mathrm{int\,} a_0^{-1}(0),
$$
and
$$
  t_Q=\min \{t\in [0,T]\;:\;M_t\neq \emptyset\}<
  t_P= \max\{t\in [0,T]\;:\;M_t\neq \emptyset\},
$$
where, for any subset $A\subset \bar\O \times [0,T]$ and $t\in [0,T]$, $A_t$ stands for the $t$-slice
$$
  A_t:=\{x\in\bar \O\;:\;\; (x,t)\in A\}.
$$
The  right picture of Figure \ref{Fig2} shows an admissible $M$ placed between the lines
$t=t_P$ and $t=t_Q$. Lastly, we consider any function $a_1\in F$ such that
$$
  a_1(x,t)>0\quad \hbox{if and only if}\;\; (x,t)\in\mathcal{O}.
$$
Then, the next result holds.

\begin{theorem}
\label{th13}
Suppose that $a=a_0+a_1$ and there is not a continuous map $\tau :[0,T]\to \O$ such that $\tau(0)=\tau(T)$ and $(\tau(t),t)\in \mathrm{int\,} a^{-1}(0)$ for all $t \in [0,T]$. Then, $\Sigma(\l,\infty)=\infty$ for all $\l\in\R$.
\end{theorem}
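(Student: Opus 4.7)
The plan is to argue by contradiction: assume $\Sigma(\l,\infty)<\infty$ and extract from the family of principal eigenfunctions a non-trivial limit that carries a continuous path $\tau$, contradicting the hypothesis. For each $\g>0$, let $\v_\g\gg 0$ denote the principal eigenfunction of $(\mc{P}-\l m+\g a,\mc{B},Q_T)$ associated with $\Sigma(\l,\g)$, normalized by $\|\v_\g\|_{L^2(Q_T)}=1$. Testing the eigenvalue equation against $\v_\g$ and using $T$-periodicity (which kills the $\p_t$-contribution), the uniform ellipticity of $\mf{L}$, and the upper bound $\Sigma(\l,\g)\leq \Sigma(\l,\infty)<\infty$, one obtains uniform $H^1$-type bounds on $\v_\g$ together with the concentration estimate $\g\int_{Q_T}a\,\v_\g^2\leq C$.

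By Rellich compactness, a subsequence $\v_{\g_n}$ converges to some $\v\geq 0$ strongly in $L^2(Q_T)$ and weakly in the appropriate parabolic Sobolev space. The concentration estimate forces $a\v\equiv 0$, so $\supp \v \subset a^{-1}(0)$, while the normalization ensures $\v\not\equiv 0$. Passing to the limit in the weak formulation shows that $\v$ is a weak $T$-periodic solution of $\mc{P}\v-\l m\v=\Sigma(\l,\infty)\v$ on $U:=\mathrm{int\,}a^{-1}(0)$, and interior parabolic regularity upgrades $\v$ to a classical solution on $U$ satisfying $\v=0$ on $\p U\cap Q_T$ in a trace sense.

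Viewing $\v$ as a non-negative $T$-periodic function on the periodic cylinder $\bar\O\times(\R/T\Z)$ and applying the strong parabolic maximum principle componentwise, the set $\{\v>0\}$ is a union of connected components of $U$. Any component $C$ of $U$ on which $\v>0$ must ``wrap around'' the cylinder: otherwise $C$ would be a bounded component on whose parabolic initial portion $\v$ vanishes, and uniqueness for the forward parabolic initial-boundary value problem would force $\v\equiv 0$ on $C$. Consequently, $C_t\subset\O$ is non-empty for every $t\in[0,T]$ and $C_0=C_T$ by $T$-periodicity.

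Finally, the set-valued map $t\mapsto C_t$ has non-empty open values and is lower-semicontinuous (since $C$ is open in $Q_T$), so Michael's selection theorem produces a continuous selector $\tau:[0,T]\to\O$ with $(\tau(t),t)\in C\subset\mathrm{int\,}a^{-1}(0)$; a minor adjustment inside the connected open set $C_0=C_T$ enforces $\tau(0)=\tau(T)$, contradicting the hypothesis. The main obstacle is guaranteeing that $\v\not\equiv 0$ after the limit: this relies on Rellich-type compactness, which requires sufficiently regular behavior of $\v_\g$ up to the lateral boundary of $U$. The specific structure $a=a_0+a_1$, the smoothness of $\p\mc{O}$, and the separation \eqref{1.9} between $\mathrm{int\,}a^{-1}(0)$ and $\p\O\times[0,T]$ are precisely what makes this feasible, preventing the mass of $\v_\g$ from escaping to the boundary.
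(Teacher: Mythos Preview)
Your strategy---extract a nontrivial limit $\v$ of the normalized eigenfunctions, show it lives on $\mathrm{int}\,a^{-1}(0)$, and then read off a curve $\tau$---is appealing, but it has several genuine gaps.

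\emph{First, the energy step is not justified.}  The operator $\mf{L}$ is given in non-divergence form with coefficients $a_{ij}\in\mc{C}^{\t,\t/2}$, $0<\t\leq 1$, so in general $\p_i a_{ij}$ does not exist and you cannot integrate by parts to turn $\int_{Q_T}\v_\g\,\mf{L}\v_\g$ into a coercive quantity of the form $\int a_{ij}\,\p_i\v_\g\,\p_j\v_\g$.  Without that, neither the uniform $H^1$ bound nor the concentration estimate $\g\int a\,\v_\g^2\leq C$ follows from ``testing the equation against $\v_\g$ and using uniform ellipticity''.  Even granting spatial $H^1$ bounds, Aubin--Lions compactness in $L^2(Q_T)$ would require control of $\p_t\v_\g$ in some dual space, and the term $\g a\v_\g$ in the equation is not uniformly bounded in any such space.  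Your last paragraph identifies the nontriviality of the limit as the crux, but the real obstruction sits earlier, at the level of compactness itself.

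\emph{Second, Michael's selection theorem does not apply.}  That theorem needs lower-semicontinuous maps with nonempty \emph{convex} values; the slices $C_t$ are merely open subsets of $\O$.  Producing a continuous $\tau$ from a connected open set in the periodic cylinder whose every time-slice is nonempty is a separate (and in this generality nontrivial) topological statement.

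\emph{Third, and most telling,} nothing in your argument actually uses the structure $a=a_0+a_1$; if it worked as written, it would prove the full converse in Theorem~\ref{th12} for arbitrary weights, which the paper records as open.  This is a strong signal that at least one of the steps above cannot be repaired without additional input.

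For comparison, the paper takes a completely different route.  It normalizes $\v_\g$ in $L^\infty$, uses that $\v_\g\to 0$ uniformly on compact subsets of $\{a>0\}$ (Theorem~\ref{th62}), and then, exploiting the specific geometry of $a_0^{-1}(0)$ and the inserted block $M=\bar{\mc{O}}$, builds an increasing family of space--time regions whose lateral boundaries lie either in $\{a>0\}$ or in previously controlled sets.  A generalized weak maximum principle (Theorem~\ref{th61}) applied to $\psi_\g=e^{-t\Sigma(\l,\g)}\v_\g$, together with the parabolic Harnack inequality at the critical levels $t=t_Q$ and $t=t_P$, propagates the decay of $\v_\g$ forward in time through the gap created by $M$, all the way to $t=T$.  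Periodicity then forces $\v_\g\to 0$ uniformly on $\bar Q_T$, contradicting $\|\v_\g\|_{L^\infty}=1$.  The argument is geometric and hands-on, and it is precisely here---in the construction of the regions and the passage across the critical time levels---that the hypotheses on $a_0$, $a_1$ and the non-existence of $\tau$ are used.
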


Note that the relative position of the component $M$ with respect to the remaining components where $a>0$ is crucial to determine whether or not $\Sigma(\l,\infty)$ is finite. Figure \ref{Fig3} shows a magnification of the most significative piece of the right picture of Figure \ref{Fig2} in a case where the curve $\tau$ does not exist (on the left), together with another situation where the curve $\tau$ does exist (on the right). In the second case, according to Theorem \ref{th12}, we have that  $\Sigma(\l,\infty)<\infty$ for all $\l\in\R$.

\begin{figure}[h!]
\centering\includegraphics[scale=0.8]{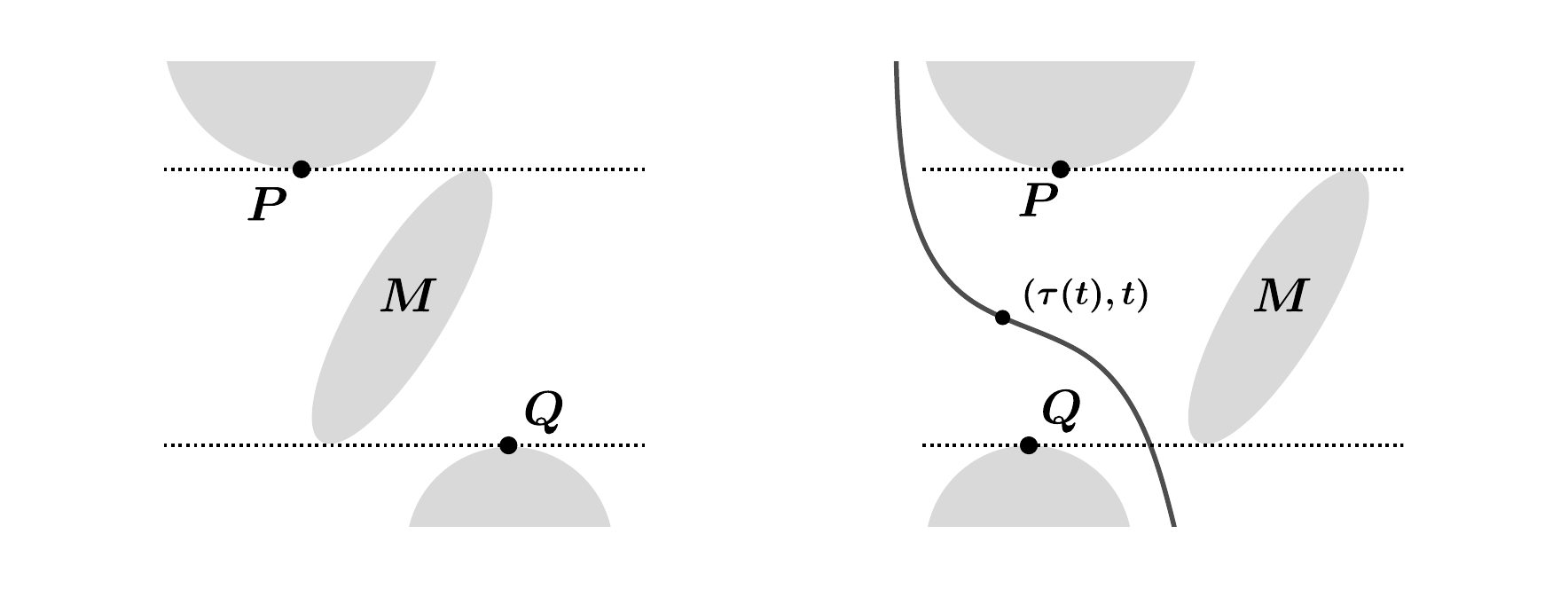}\\
 \caption{Influence of the relative position of $M$.}
\label{Fig3}
\end{figure}

Theorem \ref{th13} can be generalized up to cover a wider class of critical weight functions $a(x,t)$. Indeed, suppose that the interior of $a_0^{-1}(0)$ looks like shows
the first picture of Figure \ref{Fig2}, with $t_Q<t_P$, and, for any given integer $n\geq 2$, let
$t_i \in (t_Q,t_P)$, $i\in\{1,...,t_{n-1}\}$, such that
\begin{equation*}
  t_Q<t_1<t_2<\cdots<t_{n-1}<t_P.
\end{equation*}
Now, consider $n$ open subsets, $\mc{O}_j$, $1\leq j\leq n$, with sufficiently smooth boundaries and mutually disjoint closures, such that
\begin{equation*}
   M_j:=\bar{\mc{O}_j}\subset \mathrm{int\,}a_0^{-1}(0),\qquad 1\leq j\leq n,
\end{equation*}
and
\begin{equation*}
\begin{split}
t_Q & = \min\{t\in [0,T]\;:\; (M_1)_t\neq \emptyset\} \\ & < t_1=
\max\{t\in [0,T]\;:\; (M_1)_t\neq \emptyset\} = \min \{t\in [0,T]\;:\; (M_2)_t\neq \emptyset\}\\ &
< t_2 = \max\{t\in [0,T]\;:\; (M_2)_t\neq \emptyset\} = \min \{t\in [0,T]\;:\; (M_3)_t\neq \emptyset\}\\ & < t_3  = \max\{t\in [0,T]\;:\; (M_3)_t\neq \emptyset\} = \min \{t\in [0,T]\;:\; (M_4)_t\neq \emptyset\}\\ & < \cdots < t_{n-1}=\max\{t\in [0,T]\;:\; (M_{n-1})_t\neq \emptyset\}\\ &  \hspace{2.3cm}
= \min \{t\in [0,T]\;:\; (M_n)_t\neq \emptyset\}=t_P.
\end{split}
\end{equation*}
Lastly, let $a_i\in F$ be, $i\in\{1,...,n\}$, such that $a_i(x,t)>0$ if and only if $(x,t)\in \mc{O}_i$. Then, the next generalized version of Theorem \ref{th13} holds.

\begin{theorem}
\label{th14}
Suppose that
\begin{equation*}
 a= a_0+a_1+\cdots+a_{n}
\end{equation*}
and that there is not a continuous map $\tau :[0,T]\to \O$ such that $\tau(0)=\tau(T)$ and
$(\tau(t),t)\in \mathrm{int\,} a^{-1}(0)$ for all $t \in [0,T]$. Then, $\Sigma(\l,\infty)=\infty$ for all $\l\in\R$.
\end{theorem}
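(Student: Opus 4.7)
The plan is to argue by induction on $n$, with Theorem \ref{th13} serving as the base case $n=1$. For the inductive step from $n-1$ to $n$, fix a weight $a = a_0 + a_1 + \cdots + a_n$ as in the statement, and set
$$
\tilde a := a_0 + a_1 + \cdots + a_{n-1}.
$$
Then $\tilde a$ is itself of the form treated in Theorem \ref{th14}, with one fewer stacked obstacle, so the inductive hypothesis applies to it. I write $\Sigma_a(\l,\g)$ and $\Sigma_{\tilde a}(\l,\g)$ for the corresponding principal eigenvalues given by \eqref{1.5}, to stress the dependence on the weight.

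The argument splits into two cases according to whether a continuous periodic selector $\tilde\tau : [0,T] \to \O$ with $\tilde\tau(0) = \tilde\tau(T)$ and $(\tilde\tau(t),t) \in \mathrm{int\,}\tilde a^{-1}(0)$ for all $t$ exists. If no such $\tilde\tau$ exists, then the inductive hypothesis yields $\Sigma_{\tilde a}(\l,\infty) = \infty$; since $a \geq \tilde a$ and $a_n\in F$ is non-negative, the monotonicity of the principal eigenvalue with respect to the potential (Proposition \ref{pr21}(a)) gives $\Sigma_a(\l,\g) \geq \Sigma_{\tilde a}(\l,\g)$ for every $\g \geq 0$, and taking $\g\ua\infty$ one concludes $\Sigma_a(\l,\infty) = \infty$, as required.

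In the remaining, more delicate case, $\tilde a$ does admit such a loop $\tilde\tau$, but, by hypothesis, $a$ does not. This means that the single top obstacle $M_n$ is precisely what blocks every admissible loop. Restricting attention to the time slab $\bar\O \times [t_{n-1}, t_P]$, the obstacles $M_1,\ldots,M_{n-1}$ are absent, while $M_n$ spans this slab completely; thus the local configuration inside the slab is exactly the single-obstacle setting of Theorem \ref{th13}, with $\tilde a$ playing the role previously reserved for $a_0$. The test function construction underlying the proof of Theorem \ref{th13} depends only on the tubular-cut structure of the background weight in a neighborhood of the obstacle and on the non-existence of a continuous selector linking the bottom slice $t = t_{n-1}$ to the top slice $t = t_P$ through the reduced tube. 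Since both ingredients are present here, the same construction delivers $\Sigma_a(\l,\g) \to \infty$ as $\g\ua\infty$, closing the induction.

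The main obstacle in this scheme is to ensure that the proof of Theorem \ref{th13} is genuinely insensitive to the specific form of the background weight and relies only on the abstract tubular-cut geometry of its vanishing set and on the absence of a local loop; this must be verified by direct inspection of the variational estimates in the proof of Theorem \ref{th13}. An alternative, somewhat softer route is a continuity argument: deform $a$ continuously within $F$ so as to thin out the intermediate obstacles $M_2,\ldots,M_{n-1}$ while preserving both the stacked geometry and the non-existence of $\tau$, and exploit the continuity of $\Sigma_a(\l,\g)$ with respect to the weight to transfer the conclusion from a limiting configuration, in which only $M_1$ and $M_n$ effectively matter and which is covered by an iterated application of Theorem \ref{th13}, back to the original $a$.
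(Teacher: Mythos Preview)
Your inductive scheme has two concrete gaps. In Case 1, the inductive hypothesis does not apply to $\tilde a$: the structural assumptions preceding Theorem \ref{th14} require the top obstacle to reach exactly $t_P$ (a time fixed by $a_0$), but $M_{n-1}$ reaches only $t_{n-1}<t_P$, so $\tilde a=a_0+a_1+\cdots+a_{n-1}$ is \emph{not} of the admissible form with $n-1$ obstacles. In Case 2, Theorem \ref{th13} cannot be invoked with $\tilde a$ in the role of $a_0$: that theorem assumes the background zero set has the simple tubular shape of the first picture of Figure \ref{Fig2}, whereas $\tilde a^{-1}(0)$ carries the extra holes $\mathcal O_1,\ldots,\mathcal O_{n-1}$. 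You also describe the proof of Theorem \ref{th13} as a ``test function construction'' with ``variational estimates''; it is neither. It is a contradiction argument: assuming $\Sigma(\lambda,\infty)<\infty$, one combines Theorem \ref{th62}, the weak maximum principle (Theorem \ref{th61}) and the parabolic Harnack inequality to drive the normalized eigenfunctions $\varphi_\gamma$ to zero uniformly on $\bar Q_T$, contradicting $\|\varphi_\gamma\|_\infty=1$. Nothing in that argument restricts the eigenvalue problem to a time slab, so ``localizing to $\bar\Omega\times[t_{n-1},t_P]$'' has no direct meaning. Your continuity/deformation alternative would require continuity of $\Sigma(\cdot,\infty)$ in the weight, which is precisely the delicate point and is not available.

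The paper proceeds differently: rather than a black-box reduction, it reruns the \emph{internal} mechanism of the proof of Theorem \ref{th13} once per obstacle, inside a single contradiction argument. For $n=2$ one first repeats the Theorem \ref{th13} steps to produce a region $H\subset[t\le t_1+\delta]$ on which $\varphi_\gamma\to 0$ uniformly, and then repeats the weak-maximum-principle/Harnack step, now using $H$ (not only $[a>0]$) to control the lateral boundary of a new region $J_\e$ that climbs past $M_2$ up to $t_P$; beyond $t_P$ the lateral boundary lies in $[a>0]$ and one reaches $t=T$, forcing $\varphi_\gamma\to 0$ on $\bar Q_T$. General $n$ follows by iterating this climb. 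If you wish to rescue your induction, the correct inductive statement is not Theorem \ref{th14} itself but the intermediate claim ``if $\Sigma(\lambda,\infty)<\infty$ then $\varphi_\gamma\to 0$ uniformly on a region reaching strictly above $t_k$'', which is exactly what the paper iterates.
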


The left picture of Figure \ref{Fig4} illustrates a typical situation with $n=3$ where all the
assumptions of Theorem \ref{th14} are fulfilled. In the case illustrated by the right picture one can construct an admissible curve $\tau$ and hence, due to Theorem \ref{th12},
$\Sigma(\l,\infty)<\infty$ for all $\l\in\R$. This example shows that the non-existence of $\tau$ is necessary for the validity of Theorem \ref{th14}.

\begin{figure}[h!]
\centering\includegraphics[scale=0.8]{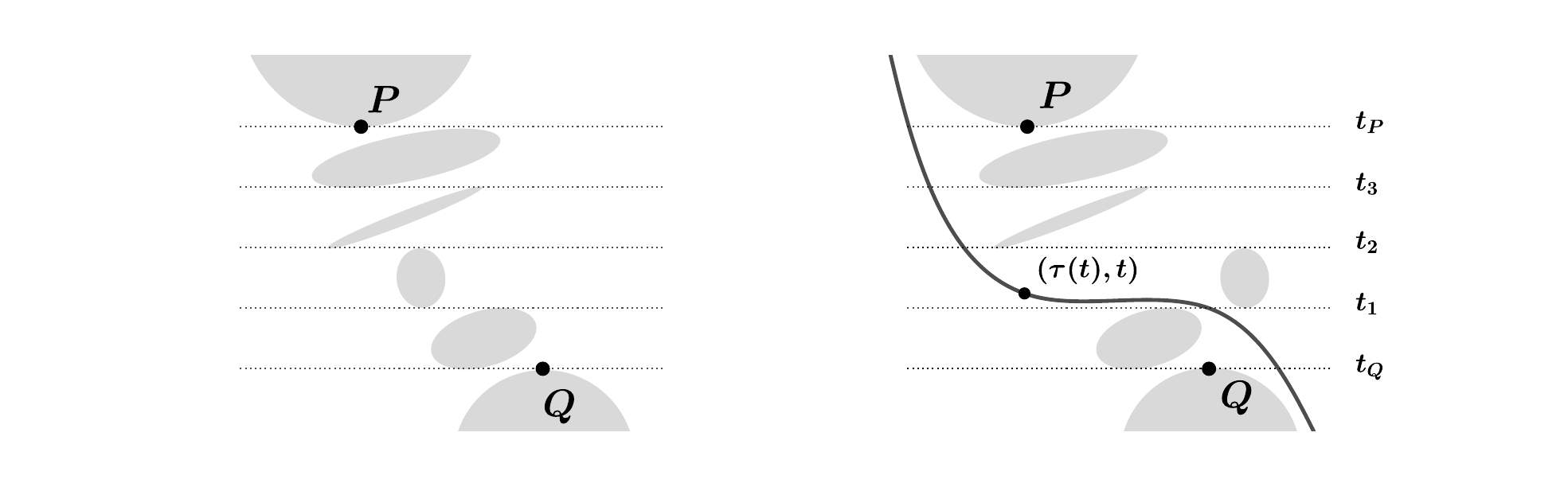}\\
 \caption{Two admissible configurations with $n=4$.}
\label{Fig4}
\end{figure}

This paper is organized as follows: Section 2 contains some preliminaries, Section 3 analyzes some general properties of the bi-parametric curve of principal eigenvalues $\Sigma(\l,\gamma)$, Section 4 analyzes the continuous dependence of $\s[\mc{P},\mc{B},Q_T]$ with respect to the variations of
$\O$ along $\G_0$, Section 5 delivers the proof of Theorem \ref{th11} and, finally, Section 6 delivers the proof of Theorems \ref{th13} and \ref{th14}.

\section{Preliminaries}

\noindent The principal eigenvalue $\s[\mc{P},\mc{B},Q_T]$ satisfies the next pivotal characterization.
It is \cite[Th. 1.2]{ALGN}.

\begin{theorem}
\label{th21}
The following conditions are equivalent:
\begin{itemize}
\item[(a)] $\s[\mc{P},\mc{B},Q_T]>0$.

\item[(b)] $(\mathcal{P},\mc{B},Q_T)$ possesses  a positive strict supersolution $h\in E$.

\item[(c)]  Any strict supersolution $u\in E$ of
$(\mathcal{P},\mc{B},Q_T)$ satisfies $u\gg 0$ in the sense that, for every $t\in [0,T]$, $u(x,t)>0$ for all $x\in\O\cup\G_1$, and
\begin{equation*}
  \p_\nu u (x,t)<0\;\; \hbox{for all }\;\; x \in u^{-1}(0)\cap \G_0.
\end{equation*}
In other words, $(\mc{P},\mc{B},Q_T)$ satisfies the strong maximum principle.
\end{itemize}
\end{theorem}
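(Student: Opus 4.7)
The plan is to establish the equivalence through the cycle $(a)\Rightarrow(b)\Rightarrow(a)$ together with the implication $(a)\Rightarrow(c)$, closing the loop with $(c)\Rightarrow(b)$ which is immediate once the concrete strict supersolution $\varphi$ produced under $(a)$ is recognized as lying within the class covered by $(c)$.

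For $(a)\Rightarrow(b)$, I take $h:=\varphi$, the principal eigenfunction of $(\mathcal{P},\mathcal{B},Q_T)$, which satisfies $\varphi\gg 0$, $\mathcal{B}\varphi=0$ and $\mathcal{P}\varphi=\sigma\varphi$. Under $(a)$, $\sigma>0$ immediately gives $\mathcal{P}\varphi>0$ throughout $Q_T$, so $\varphi$ is itself a positive strict supersolution. The implication $(b)\Rightarrow(a)$ proceeds by duality: let $\varphi^*\gg 0$ denote the principal eigenfunction of the formally adjoint periodic-parabolic problem $(\mathcal{P}^*,\mathcal{B}^*,Q_T)$, carrying the same principal eigenvalue $\sigma$. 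Testing $\mathcal{P}h=:f$, with $f\gneq 0$, against $\varphi^*$ over $Q_T$ and integrating by parts, the $T$-periodicity of both $h$ and $\varphi^*$ kills the temporal boundary contributions; on $\Gamma_0$ the Dirichlet conditions together with the Hopf positivity $\partial_\nu\varphi^*<0$ and $\mathcal{B}h=h\geq 0$ produce a non-negative normal-derivative remainder; and on $\Gamma_1$ the $\beta$-term cancels between $(\mathcal{P},\mathcal{B})$ and $(\mathcal{P}^*,\mathcal{B}^*)$. One arrives at
\begin{equation*}
\int_{Q_T}\varphi^*\,f\,dx\,dt \;=\; \sigma\int_{Q_T}h\,\varphi^*\,dx\,dt + R,\qquad R\geq 0,
\end{equation*}
whose left-hand side is strictly positive while $\int_{Q_T} h\varphi^*>0$, forcing $\sigma>0$.

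For $(a)\Rightarrow(c)$, given any strict supersolution $u\in E$ I would set $v:=u/\varphi$. It is well defined on $\O\cup\Gamma_1$ and extends continuously to $\Gamma_0$ via the Hopf lemma for $\varphi$. A direct computation shows that $v$ satisfies a linear periodic-parabolic inequality $\partial_t v+\tilde{\mathcal{L}}v\geq 0$, where $\tilde{\mathcal{L}}$ has the same principal symbol as $\mathcal{L}$, a drift modified by terms of the form $a_{ij}\partial_i\varphi/\varphi$, and a \emph{positive} zero-order coefficient equal to $\sigma$. The classical parabolic strong maximum principle and Hopf boundary lemma, combined with the $T$-periodicity of $v$ (which circumvents the initial-data issue familiar from the Cauchy problem) and the homogeneous boundary conditions inherited by $v$, then yield $v\gg 0$ in the sense required, i.e., $u=\varphi v\gg 0$.

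The main obstacle I anticipate is the careful tracking of boundary remainders in both the adjoint integration by parts of $(b)\Rightarrow(a)$ and the conjugation computation of $(a)\Rightarrow(c)$, especially on $\Gamma_1$ where $\beta$ is allowed to change sign. This places us outside the classical Hess--Beltramo framework and demands the adjoint principal eigenfunction $\varphi^*$, with its Hopf behavior on $\Gamma_0$ and the correct oblique-derivative behavior on $\Gamma_1$; the existence and structural properties of $\varphi^*$ in this non-classical setting are precisely what is established in \cite{ALGR,ALGN}, and once they are in hand, the argument above is a routine combination of duality and the parabolic strong maximum principle.
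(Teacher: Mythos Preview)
The paper does not supply a proof of this theorem; it is quoted verbatim as \cite[Th.~1.2]{ALGN} and used as a tool throughout Section~2. There is therefore no in-paper argument to compare your proposal against.

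Your sketches of $(a)\Rightarrow(b)$, of $(b)\Rightarrow(a)$ via the adjoint eigenfunction, and of $(a)\Rightarrow(c)$ via the quotient $v=u/\varphi$ follow standard lines and, modulo the careful bookkeeping of boundary terms that you yourself flag, are sound in outline. The genuine gap is in your closing implication $(c)\Rightarrow(b)$. You claim it is ``immediate once the concrete strict supersolution $\varphi$ produced under $(a)$ is recognized as lying within the class covered by $(c)$'', but this is circular: $\varphi$ satisfies $\mathcal{P}\varphi=\sigma\varphi$, which is $\gneq 0$ precisely when $\sigma>0$, i.e.\ when $(a)$ already holds. If $\sigma\leq 0$ then $\varphi$ is not a strict supersolution at all, so you have no candidate to feed into $(c)$, and the loop is not closed.

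To finish without presupposing $(a)$ one must either (i) exhibit a strict supersolution of $(\mathcal{P},\mathcal{B},Q_T)$ whose existence does not depend on the sign of $\sigma$ --- then $(c)$ forces it to be $\gg 0$ and $(b)$ follows --- or (ii) argue $(c)\Rightarrow(a)$ by contradiction: if $\sigma<0$ then $-\varphi$ is a strict supersolution (since $\mathcal{P}(-\varphi)=-\sigma\varphi>0$ and $\mathcal{B}(-\varphi)=0$) which is negative, violating $(c)$; the borderline case $\sigma=0$ then needs a separate perturbation or sliding construction. In the classical setting ($\beta\geq 0$, $c\geq 0$) a large positive constant serves as the universal strict supersolution in route~(i); with sign-changing $\beta$ on $\Gamma_1$ this is no longer automatic, and producing such a function (or carrying out the $\sigma=0$ step in route~(ii)) is exactly part of the work done in \cite{ALGR,ALGN}.
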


Theorem \ref{th21} is the  periodic-parabolic counterpart of Theorem 2.4 of
Amann and L\'{o}pez-G\'{o}mez \cite{AL98}, which goes back to L\'{o}pez-G\'{o}mez and
Molina-Meyer \cite{LGMM} for cooperative systems under Dirichlet boundary conditions, and to
\cite{ALG94} for cooperative periodic-parabolic systems under  Dirichlet boundary conditions. In  \cite{ALGR} the most fundamental properties of the principal eigenvalue $\s[\mc{P},\mc{B},Q_T]$ were
derived from Theorem \ref{th21}. Among them, its uniqueness, algebraic simplicity  and strict dominance, as well as the properties collected in the next result, which is a direct consequence of Propositions 2.1 and 2.6 of \cite{ALGR}.

\begin{proposition}
\label{pr21} The principal eigenvalue satisfies the following properties:
\begin{enumerate}
\item[{\rm (a)}] Let $V_1, V_2\in F$ be such that $V_1 \lneq V_2$. Then,
\[
  \s[\mc{P}+V_1,\mc{B},Q_T]<\s[\mc{P}+V_2,\mc{B},Q_T].
\]
\item[{\rm (b)}] Let $\O_0$ be a subdomain of $\O$ of class $\mc{C}^{2+\t}$ such that $\bar\O_0\subset \O$. Then,
\[
   \s[\mc{P},\mc{B},Q_T]< \s[\mc{P},\mc{D},\O_0\times (0,T)].
\]
\end{enumerate}
\end{proposition}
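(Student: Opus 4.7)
The plan is to deduce both monotonicity statements directly from the supersolution characterization provided by Theorem \ref{th21}, combined with the obvious shift relation
\[
 \sigma[\mathcal{P}+V-c,\mc{B},Q_T]=\sigma[\mathcal{P}+V,\mc{B},Q_T]-c,\qquad c\in\R,
\]
which follows at once from the definition of the principal eigenvalue. In each case, the game is to exhibit a concrete positive strict supersolution of the shifted problem, so that Theorem \ref{th21}(b)$\Rightarrow$(a) delivers the strict inequality.

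For part (a), denote $\sigma_i:=\sigma[\mc{P}+V_i,\mc{B},Q_T]$ for $i=1,2$, and let $\varphi_1\in E$ be the principal eigenfunction of $(\mc{P}+V_1,\mc{B},Q_T)$. By the strong positivity  built into the definition of the principal eigenvalue, $\varphi_1\gg 0$; in particular $\varphi_1>0$ throughout $\O\cup\G_1$ for every $t\in[0,T]$. A direct computation gives
\[
 (\mc{P}+V_2-\sigma_1)\varphi_1 \;=\;(\mc{P}+V_1-\sigma_1)\varphi_1+(V_2-V_1)\varphi_1 \;=\;(V_2-V_1)\varphi_1\gneq 0,
\]
with $\mc{B}\varphi_1=0$, because $V_2-V_1\gneq 0$ and $\varphi_1\gg 0$. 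Hence $\varphi_1$ is a positive strict supersolution of $(\mc{P}+V_2-\sigma_1,\mc{B},Q_T)$. Theorem \ref{th21} then yields $\sigma[\mc{P}+V_2-\sigma_1,\mc{B},Q_T]>0$, which, by the shift relation, is exactly $\sigma_2>\sigma_1$.

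For part (b), set $\sigma:=\sigma[\mc{P},\mc{B},Q_T]$ and let $\varphi\in E$ be its principal eigenfunction, so that $\mc{P}\varphi=\sigma\varphi$ in $Q_T$ and $\varphi\gg 0$. Because $\bar\O_0\subset\O$, we have $\p\O_0\subset\O\cup\G_1$, so $\varphi(x,t)>0$ for every $(x,t)\in\p\O_0\times[0,T]$. Restricting $\varphi$ to $\bar\O_0\times[0,T]$, we find
\[
 (\mc{P}-\sigma)\varphi=0 \;\;\hbox{in}\;\; \O_0\times(0,T),\qquad \mc{D}\varphi=\varphi\gneq 0 \;\;\hbox{on}\;\; \p\O_0\times[0,T],
\]
so $\varphi$ is a (non-trivial) positive strict supersolution of $(\mc{P}-\sigma,\mc{D},\O_0\times(0,T))$ in the sense required by Theorem \ref{th21}(b). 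Applying Theorem \ref{th21} on the cylinder $\O_0\times(0,T)$ gives $\sigma[\mc{P}-\sigma,\mc{D},\O_0\times(0,T)]>0$, which is $\sigma[\mc{P},\mc{D},\O_0\times(0,T)]>\sigma$, as desired.

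The only delicate point is the boundary behaviour used in part (b): one must know that the principal eigenfunction $\varphi$ is \emph{strictly} positive on the whole of $\p\O_0\times[0,T]$, not merely non-negative, so that the restricted function is a genuine strict supersolution for the Dirichlet problem on $\O_0\times(0,T)$. This is the main (and in fact only) real obstacle, and it is dissolved by the definition of $\varphi\gg 0$ recalled in the excerpt together with the assumption $\bar\O_0\subset\O$, which keeps $\p\O_0$ away from the Dirichlet part $\G_0$ of $\p\O$. Apart from this check, both statements are obtained purely by the supersolution trick plus the scalar shift of the eigenvalue.
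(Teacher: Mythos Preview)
Your proof is correct and follows the standard route. The paper itself does not give a proof of Proposition~\ref{pr21}; it simply records the result as ``a direct consequence of Propositions 2.1 and 2.6 of \cite{ALGR}.'' Your argument---feeding the principal eigenfunction of the smaller problem as a positive strict supersolution into the characterization Theorem~\ref{th21} for the larger/shifted problem---is precisely the mechanism by which such monotonicity statements are obtained in that reference, so there is no meaningful difference in approach.
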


The principal eigenvalue
\begin{equation*}
\Sigma(\lambda):=\sigma[\mathcal{P}-\lambda m,\mc{B},Q_T],\qquad \l\in\R,
\end{equation*}
plays a pivotal role in characterizing the existence of positive solutions of \eqref{1.1}. According to Theorems 5.1 and 6.1 of \cite{ALGR}, $\Sigma(\l)$ is analytic, strictly concave and, as soon as $m(x,t)$ satisfies \eqref{1.8},
\begin{equation*}
\lim_{\lambda\to \pm \infty} \Sigma(\lambda)=-\infty.
\end{equation*}
Therefore, if \eqref{1.8} holds and $\Sigma(\l_0)>0$ for some $\l_0\in\R$, then there exist $\lambda_-<\lambda_0<\lambda_+$ such that
\begin{equation}
\label{2.1}
 \Sigma(\lambda)\left\{\begin{array}{ll} <0 &\quad \hbox{if} \;\; \lambda \in (-\infty, \lambda_-)\cup(\lambda_+,\infty), \cr =0 &\quad  \hbox{if} \;\; \lambda\in\{\lambda_-,\l_+\},\cr
 >0 &\quad \hbox{if}\;\; \lambda \in ( \lambda_-,\lambda_+). \end{array} \right.
\end{equation}
Actually, one can choose $\lambda_0$ such that $\Sigma'(\lambda_0)=0$, $\Sigma'(\lambda)>0$ if $\lambda<\lambda_0$ and $\Sigma'(\lambda)<0$ if $\lambda>\lambda_0$.  Figure \ref{Fig5} shows the graph of $\Sigma(\l)$ for this choice in such case.

\begin{figure}[h!]
\centering
\includegraphics[scale=1]{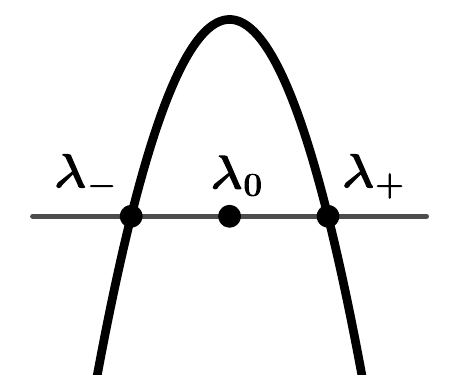}
\caption{The graph of $\Sigma(\lambda)$. }
\label{Fig5}
\end{figure}
\par
The next result follows from Lemma 3.1 and Theorem 4.1 of \cite{AALG2}. It provides us with a necessary condition for the existence of positive periodic solutions of \eqref{1.1}, as well as with its uniqueness.

\begin{theorem}
\label{th22}
Suppose that $a\gneq 0$ in $Q_T$ and that \eqref{1.1} admits a positive periodic solution, $(\l,u)$, with $u\in E$. Then, $\Sigma(\lambda)<0$. Moreover, $u$ is the unique positive periodic solution of \eqref{1.1} and $u\gg 0$, in the sense that
 \begin{equation*}
  u(x,t)>0 \;\; \hbox{for all}\;\; x\in \O\cup\G_1\;\;\hbox{and}\;\;
  \p_{\nu} u(x,t)<0 \;\;\hbox{for all}\;\; x\in \G_0.
\end{equation*}
 \end{theorem}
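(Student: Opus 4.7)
The plan is to reduce each of the three assertions, strict positivity $u\gg 0$, $\Sigma(\l)<0$, and uniqueness, to a combination of the strong maximum principle (Theorem \ref{th21}) and the strict monotonicity of the principal eigenvalue with respect to the zeroth-order potential (Proposition \ref{pr21}(a)), exploiting the observation that any positive solution of \eqref{1.1} is a principal eigenfunction, with eigenvalue $0$, of a naturally associated linear periodic-parabolic operator.

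\textbf{Positivity and sign of $\Sigma(\l)$.} Starting from a positive periodic solution $u\in E$, I would rewrite \eqref{1.1} as the linear identity
\begin{equation*}
(\mc{P}-\l m + a f(u))\,u = 0, \qquad \mc{B} u = 0,
\end{equation*}
which exhibits $u$ as a nonnegative, non-trivial $T$-periodic eigenfunction, with eigenvalue $0$, of the periodic-parabolic operator $\mc{P}-\l m + a f(u)$, whose zeroth-order coefficient lies in $F$. Uniqueness and positivity of the principal eigenfunction then force both
\begin{equation*}
\s[\mc{P}-\l m + a f(u),\mc{B},Q_T] = 0
\end{equation*}
and $u\gg 0$ in the sense of the statement. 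Since $u\gg 0$ on $\O\cup\G_1$ and $f$ is strictly increasing with $f(0)=0$, we have $f(u)\gneq 0$ and hence $a f(u)\gneq 0$ (this is the only place where $a\gneq 0$ intervenes). Proposition \ref{pr21}(a) immediately yields
\begin{equation*}
\Sigma(\l) = \s[\mc{P}-\l m,\mc{B},Q_T] < \s[\mc{P}-\l m + a f(u),\mc{B},Q_T] = 0.
\end{equation*}

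\textbf{Uniqueness via sweeping.} Let $u_1,u_2\in E$ be two positive periodic solutions; by the previous step both are $\gg 0$, and the Hopf estimates encoded in $u_i\gg 0$ keep the ratio $u_2/u_1$ bounded on $\bar Q_T$ (including up to $\G_0$). Hence
\begin{equation*}
\k_* := \inf\bigl\{\k>0 \;:\; \k u_1 \geq u_2 \text{ on } \bar Q_T\bigr\}
\end{equation*}
is finite, and $\k_* u_1 \geq u_2$. By symmetry it suffices to show $\k_*\leq 1$; assume, toward a contradiction, $\k_*>1$, and set $w := \k_* u_1 - u_2 \geq 0$. Introducing the mean-value potential
\begin{equation*}
\phi := \begin{cases} \dfrac{f(\k_* u_1)\,\k_* u_1 - f(u_2)\,u_2}{\k_* u_1 - u_2} & \text{where } \k_* u_1 > u_2, \\[1ex] f(u_2) + f'(u_2)\,u_2 & \text{where } \k_* u_1 = u_2, \end{cases}
\end{equation*}
and using $(\mc{P}-\l m)u_i = -a f(u_i)u_i$, a direct computation gives
\begin{equation*}
(\mc{P}-\l m + a\phi)\,w \;=\; a\,\k_*\,u_1\bigl[f(\k_* u_1) - f(u_1)\bigr].
\end{equation*}
Since $\k_*>1$ and $f$ is strictly increasing, the right-hand side is $\gneq 0$, so $w$ is a non-trivial nonnegative strict supersolution of $(\mc{P}-\l m + a\phi,\mc{B},Q_T)$. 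On the other hand, the mean value theorem gives $\phi > f(u_2)$ on $\{u_2>0\}=\O\cup\G_1$, whence $a\phi\gneq a f(u_2)$; since $u_2$ is a principal eigenfunction of $\mc{P}-\l m + a f(u_2)$ with eigenvalue $0$, Proposition \ref{pr21}(a) yields $\s[\mc{P}-\l m + a\phi,\mc{B},Q_T] > 0$. Theorem \ref{th21}(c) then forces $w\gg 0$, so that $(\k_*-\e)u_1 \geq u_2$ for some sufficiently small $\e>0$, contradicting the minimality of $\k_*$. Hence $\k_*\leq 1$, and by symmetry $u_1 = u_2$.

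\textbf{Main obstacle.} The technical heart is the uniqueness step, where in the degenerate setting, with $a^{-1}(0)$ possibly very large, one must ensure that the comparison $a\phi \gneq a f(u_2)$ persists strictly on a set large enough to force $\s[\mc{P}-\l m + a\phi,\mc{B},Q_T]>0$ via Proposition \ref{pr21}(a). This rests on three intertwined ingredients: the Hopf boundary estimates in $u_i\gg 0$ (keeping $u_2/u_1$ bounded up to $\G_0$ and hence $\k_*<\infty$); the strict monotonicity of $f$ (producing the crucial inequality $f(\k_* u_1)>f(u_1)$ where $u_1>0$ and $\k_*>1$); and the combined hypotheses $a\gneq 0$ and $u_1\gg 0$, which make $\{a>0\}\cap\{u_1>0\}$ of positive measure, so that the strict gap $a\phi - a f(u_2)$ genuinely persists. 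An additional technicality is to verify that $\phi$ has the Hölder regularity needed for $a\phi\in F$, handled in practice by an approximation argument on $f$. These are the ingredients packaged into \cite[Lem. 3.1 \& Th. 4.1]{AALG2} invoked in the statement.
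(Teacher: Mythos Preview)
The paper does not give a proof of this theorem at all; it simply cites \cite[Lem.~3.1 and Th.~4.1]{AALG2}. Your proposal correctly reconstructs the argument that lies behind that citation: recognising $u$ as a principal eigenfunction of $\mc{P}-\l m + a f(u)$ to obtain $u\gg 0$ and $\Sigma(\l)<0$ via Proposition~\ref{pr21}(a), and then a sweeping argument for uniqueness, with the strict monotonicity of $f$ and $a\gneq 0$ supplying the strict potential gap that makes Theorem~\ref{th21}(c) applicable. You also correctly flag the one genuine technicality (the H\"older regularity of the mean-value potential $\phi$), so the sketch is accurate and complete at the level the paper requires.
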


The next result establishes that $\Sigma(\l)<0$ is not only necessary but also sufficient whenever
$a^{-1}(0)=\emptyset$. It is Theorem 6.1 of \cite{AALG2}.

\begin{theorem}
\label{th23}
Suppose that $a(x,t)>0$ for all $(x,t)\in \bar{Q}_T$. Then, \eqref{1.1} possesses a positive periodic solution if, and only if, $\Sigma(\lambda)<0$.
\end{theorem}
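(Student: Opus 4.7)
The necessity $\Sigma(\l)<0$ is already furnished by Theorem \ref{th22}. For the converse, my plan is the classical parabolic sub/super-solution scheme together with monotone iteration of the Poincar\'{e} map, with the additional care needed because $\b$ in the operator $\mc{B}$ may change sign along $\G_1$.

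\emph{Subsolution.} Let $\v\in E$, $\v\gg 0$, be a principal eigenfunction associated with $\Sigma(\l)$, i.e., $(\mc{P}-\l m)\v=\Sigma(\l)\v$ in $Q_T$ and $\mc{B}\v=0$ on $\p\O\times[0,T]$. Setting $\underline u:=\e\v$, one computes
\begin{equation*}
  \bigl(\mc{P}-\l m+af(\underline u)\bigr)\underline u=\e\v\bigl[\Sigma(\l)+a(x,t)f(\e\v)\bigr].
\end{equation*}
Since $\Sigma(\l)<0$ is a negative constant and $af(\e\v)\to 0$ uniformly on $\bar Q_T$ as $\e\downarrow 0$ (by $f(0)=0$ together with the continuity of $a$ and $\v$), the bracket is non-positive for all sufficiently small $\e>0$, so $\underline u$ is a positive subsolution satisfying $\mc{B}\underline u=0$.

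\emph{Supersolution.} I would first choose $\a\in\mc{C}^2(\bar\O)$ with $\p_\nu\a+\b\geq 0$ on $\G_1$, which is feasible since $\b\in\mc{C}^{1+\t}(\G_1)$ and any prescribed conormal along $\G_1$ is attained by some smooth extension to $\bar\O$. Then $\bar u:=Me^\a$ satisfies $\mc{B}\bar u\geq 0$ on $\p\O\times[0,T]$ for every $M>0$, and
\begin{equation*}
  \frac{\bigl(\mc{P}-\l m+af(\bar u)\bigr)\bar u}{Me^\a}=\frac{\mc{P}e^\a}{e^\a}-\l m+a(x,t)f(Me^\a).
\end{equation*}
Because $a(x,t)\geq\o>0$ on $\bar Q_T$ by hypothesis, while $e^\a$ is bounded away from zero and from infinity, assumption $(H_f)$ yields $af(Me^\a)\to\infty$ uniformly as $M\to\infty$, so the right-hand side is non-negative for every sufficiently large $M$. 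Hence $\bar u$ is a supersolution, and by enlarging $M$ and shrinking $\e$ further I may also ensure $\underline u\leq\bar u$ on $\bar Q_T$.

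\emph{Monotone iteration.} Having an ordered pair $\underline u\leq\bar u$, I would conclude by iterating the Poincar\'{e} operator $\mathcal{T}:v_0\mapsto u(\cdot,T;v_0)$ associated with the initial-boundary value problem underlying \eqref{1.1}. The parabolic strong maximum principle makes $\mathcal{T}$ order-preserving, so the iterates starting at $\underline u(\cdot,0)$ (respectively, $\bar u(\cdot,0)$) form a non-decreasing (respectively, non-increasing) sequence trapped in $[\underline u(\cdot,0),\bar u(\cdot,0)]$. Standard parabolic Schauder bootstrapping upgrades their pointwise limits to functions in $E$ which are fixed points of $\mathcal{T}$, hence positive $T$-periodic solutions of \eqref{1.1}; uniqueness follows from Theorem \ref{th22}. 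The main technical hurdle is precisely the construction of the supersolution without any sign restriction on $\b$: the exponential multiplier $e^\a$ converts the conormal condition on $\G_1$ into a pointwise prescription that is easy to satisfy, while the uniform positivity of $a$ lets the absorption $af(u)$ dominate for large $M$.
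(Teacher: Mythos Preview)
Your argument is correct. The paper does not actually prove Theorem~\ref{th23} here; it is quoted as Theorem~6.1 of \cite{AALG2}. However, Theorem~\ref{th23} is the special case of Theorem~\ref{th11} in which $a\geq\o>0$ on $\bar Q_T$ (so that $\Sigma(\l,\infty)=\infty$ automatically), and the paper's proof of Theorem~\ref{th11} in Section~5 can be read as a proof of this statement.

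The two arguments diverge only in the construction of the supersolution. You take $\bar u=Me^\a$ with $\a\in\mc{C}^2(\bar\O)$ chosen so that $\p_\nu\a+\b\geq 0$ on $\G_1$, and then let the uniformly positive absorption term $a f(Me^\a)\geq \o f(M\min e^\a)\to\infty$ dominate the bounded quantity $\mf{L}e^\a/e^\a-\l m$. The paper instead enlarges $\O$ across $\G_0$ to a domain $\mc{O}_n$, uses the continuity result Theorem~\ref{th41} to keep $\Sigma_n(\l,\g)>0$ for some fixed $\g$, and takes $\bar u=\k\psi_n$ with $\psi_n$ the principal eigenfunction on $\mc{O}_n$; positivity of $\psi_n$ on the smaller closure $\bar\O$ (including $\G_0$) furnishes the boundary inequality. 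Your route is lighter: it avoids the domain-perturbation machinery entirely and handles the possibly sign-changing $\b$ with a single exponential multiplier. Its cost is that it exploits $a\geq\o>0$ in an essential way and does not extend to the degenerate setting of Theorem~\ref{th11}, whereas the paper's eigenfunction-based supersolution only needs $\Sigma(\l,\g)>0$ for some finite $\g$, which is exactly what the hypothesis $\Sigma(\l,\infty)>0$ provides even when $a$ vanishes on a large set.
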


\section{A bi-parametric family of principal eigenvalues}

\noindent The next result collects some important properties of the bi-parametric family of principal eigenvalues $\Sigma(\l,\g)$ introduced in \eqref{1.5}.

\begin{theorem}
\label{th31}
The function $\Sigma(\lambda,\gamma)$ is concave, analytic in $\l$ and $\g$, and increasing with respect to $\g\in\R$. Thus, for every $\l\in\R$, the limit $\Sigma(\l,\infty)$ introduced in \eqref{1.6}
is well defined and, for every $\g>0$, satisfies
\begin{equation}
\label{iii.1}
\Sigma(\lambda)=\Sigma(\l,0)<\Sigma(\l,\g)<\Sigma(\lambda,\infty) \quad \hbox{for all} \;\; \lambda \in \R.
\end{equation}
Moreover, one of the following excluding options occurs. Either
\begin{enumerate}
\item[{\rm (a)}] $\Sigma(\lambda,\infty)=\infty$ for all $\lambda\in \R$; or
\item[{\rm (b)}] $\Sigma(\lambda,\infty)<\infty$ for all $\lambda \in \R$.
\end{enumerate}
Furthermore, when {\rm (b)} occurs, also the function $\Sigma(\cdot, \infty)$ is concave.
\end{theorem}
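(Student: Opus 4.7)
The plan is to derive all four conclusions from (i) the standard sensitivity properties of the principal eigenvalue collected in Proposition 2.1 and Theorem 2.1, and (ii) one Lipschitz estimate in $\lambda$ that is uniform in $\gamma$.

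\textbf{Step 1: analyticity, monotonicity, and the basic inequality \eqref{iii.1}.} The potential $V_{\lambda,\gamma}:=-\lambda m+\gamma a$ depends affinely on $(\lambda,\gamma)$ with coefficients in $F$, and, by the results quoted in Section~2, the principal eigenvalue $\sigma[\mathcal{P}+V,\mc{B},Q_T]$ is algebraically simple with strongly positive eigenfunction. Standard Kato-type analytic perturbation theory for simple eigenvalues then gives that $(\lambda,\gamma)\mapsto \Sigma(\lambda,\gamma)$ is real-analytic on $\R^2$. The strict monotonicity in $\gamma$ is a direct application of Proposition~\ref{pr21}(a) to the pair $V_{\lambda,\gamma_1}\lneq V_{\lambda,\gamma_2}$ (using $a\gneq 0$). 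Combined with the trivial identity $\Sigma(\lambda,0)=\Sigma(\lambda)$, this establishes \eqref{iii.1} and, since $\Sigma(\lambda,\gamma)$ is increasing in $\gamma$, the limit $\Sigma(\lambda,\infty)\in(-\infty,\infty]$ is well defined.

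\textbf{Step 2: concavity in $(\lambda,\gamma)$.} This reduces to the classical fact that $V\mapsto \sigma[\mathcal{P}+V,\mc{B},Q_T]$ is concave on $F$. The standard proof goes as follows: given $V_0,V_1\in F$ with principal eigenpairs $(\sigma_i,\varphi_i)$, and $s\in[0,1]$, consider $\psi:=\varphi_0^{1-s}\varphi_1^{s}$. A direct computation, using the elementary inequality
\[
\sum_{i,j} a_{ij}\,\partial_i\psi\,\partial_j\psi/\psi \;\leq\; (1-s)\sum_{i,j} a_{ij}\,\partial_i\varphi_0\,\partial_j\varphi_0/\varphi_0+s\sum_{i,j} a_{ij}\,\partial_i\varphi_1\,\partial_j\varphi_1/\varphi_1,
\]
which is just the concavity of the logarithm applied to the quadratic form of $(a_{ij})$, shows that $\psi$ is a positive (strict, if $V_0\neq V_1$) supersolution of $(\mathcal{P}+V_s-((1-s)\sigma_0+s\sigma_1),\mc{B},Q_T)$. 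Hence, by Theorem~\ref{th21}, $\sigma[\mathcal{P}+V_s,\mc{B},Q_T]\geq (1-s)\sigma_0+s\sigma_1$. Since $V_{\lambda,\gamma}$ is affine in $(\lambda,\gamma)$, concavity of $\Sigma$ in $(\lambda,\gamma)$ follows.

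\textbf{Step 3: the dichotomy (a)/(b).} This is the crux. The key is a Lipschitz bound in $\lambda$ that is uniform in $\gamma$. For any $\lambda_1,\lambda_2\in\R$ and $\gamma\in\R$, the potentials satisfy
\[
V_{\lambda_2,\gamma}=V_{\lambda_1,\gamma}-(\lambda_2-\lambda_1)m\;\leq\; V_{\lambda_1,\gamma}+|\lambda_2-\lambda_1|\,\|m\|_\infty,
\]
so Proposition~\ref{pr21}(a), together with the elementary identity $\sigma[\mathcal{P}+V+c,\mc{B},Q_T]=\sigma[\mathcal{P}+V,\mc{B},Q_T]+c$ for any constant $c$, yields
\[
\Sigma(\lambda_2,\gamma)\leq \Sigma(\lambda_1,\gamma)+|\lambda_2-\lambda_1|\,\|m\|_\infty.
\]
Interchanging $\lambda_1,\lambda_2$ gives the Lipschitz estimate $|\Sigma(\lambda_1,\gamma)-\Sigma(\lambda_2,\gamma)|\leq \|m\|_\infty\,|\lambda_1-\lambda_2|$, with Lipschitz constant independent of $\gamma$. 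Letting $\gamma\uparrow\infty$ therefore shows that if $\Sigma(\lambda_0,\infty)<\infty$ for some $\lambda_0\in\R$ then $\Sigma(\lambda,\infty)\leq \Sigma(\lambda_0,\infty)+\|m\|_\infty|\lambda-\lambda_0|<\infty$ for every $\lambda\in\R$; contrapositively, if $\Sigma(\lambda_0,\infty)=\infty$ for some $\lambda_0$, then $\Sigma(\lambda,\infty)=\infty$ for all $\lambda$. This is exactly the dichotomy (a)/(b).

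\textbf{Step 4: concavity of $\Sigma(\cdot,\infty)$ in case (b).} For each $\gamma\in\R$ the function $\Sigma(\cdot,\gamma)$ is concave by Step~2, and the family is pointwise non-decreasing in $\gamma$ by Step~1. In case (b) the pointwise supremum is finite, so $\Sigma(\cdot,\infty)=\sup_{\gamma\in\R}\Sigma(\cdot,\gamma)$ is a supremum of concave functions, hence concave. I expect the main obstacle to be a fully rigorous writing of Step~2 (the periodic-parabolic concavity of $V\mapsto \sigma[\mathcal{P}+V,\mc{B},Q_T]$, where one must justify the logarithmic substitution in the parabolic setting); everything else reduces to a clean bookkeeping of Proposition~\ref{pr21} and Theorem~\ref{th21}.
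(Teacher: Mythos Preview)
Your proposal is correct and follows essentially the same route as the paper: monotonicity via Proposition~\ref{pr21}(a), the dichotomy via the uniform-in-$\gamma$ Lipschitz bound $|\Sigma(\lambda_1,\gamma)-\Sigma(\lambda_2,\gamma)|\le\|m\|_\infty|\lambda_1-\lambda_2|$, and concavity of $\Sigma(\cdot,\infty)$ by passing to the limit in the concavity inequality. Two small remarks: the paper does not prove the concavity/analyticity of $\Sigma(\lambda,\gamma)$ from scratch but simply invokes \cite[Th.~5.1]{ALGR}, so your anticipated ``main obstacle'' in Step~2 is sidestepped; and in Step~4 your phrase ``a supremum of concave functions, hence concave'' is false as a general principle---what makes it work here, as you note just before, is that the family is increasing in $\gamma$, so the supremum is a pointwise limit, and pointwise limits of concave functions are concave.
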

\begin{proof}
The fact that $\Sigma(\lambda,\gamma)$ is analytic and concave with respect to each of the parameters $\l$ and $\gamma$ follows from \cite[Th. 5.1]{ALGR}. By Proposition \ref{pr21}(a),
it is increasing with respect of $\g\in\R$. Thus, for every $\l\in\R$, $\Sigma(\l,\infty)\in\R\cup\{+\infty\}$ is well defined and \eqref{iii.1} holds.
\par
Now, suppose that $\Sigma(\hat{\lambda},\infty)<\infty$ for some $\hat{\lambda}\in \R$. Then, according to Proposition \ref{pr21}(a),  we have that,  for every $\l, \g \in\R$,
\begin{align*}
\Sigma(\lambda, \gamma)&=\s[\mc{P}-\l m +\g a,\mc{B},Q_T]\\ & =
\sigma[\mathcal{P}-\hat{\lambda} m+\gamma a+(\hat{\lambda}-\lambda)m, \mc{B}, Q_T]\\ & \leq \Sigma(\hat{\lambda},\gamma)+|\lambda-\hat{\lambda}|\|m\|_{\infty}<+\infty.
\end{align*}
Consequently, the option (b) occurs.
\par
As, for every $\g\in\R$, the function $\Sigma(\l,\g)$ is concave, it is apparent that, for every
$\l_1, \l_2, \g  \in \R$ and $t \in [0,1]$,
\begin{equation*}
  \Sigma(t \l_1+(1-t)\l_2,\g)\geq t \,\Sigma(\l_1,\g)+(1-t) \,\Sigma(\l_2,\g).
\end{equation*}
Thus, by \eqref{iii.1}, for every $\g\in\R$,
\begin{equation*}
  \Sigma(t \l_1+(1-t)\l_2,\infty) > t \,\Sigma(\l_1,\g)+(1-t)\, \Sigma(\l_2,\g).
\end{equation*}
Consequently, letting $\g \ua\infty$, it follows that
\begin{equation*}
  \Sigma(t \l_1+(1-t)\l_2,\infty) \geq  t \,\Sigma(\l_1,\infty)+(1-t)\, \Sigma(\l_2,\infty),
\end{equation*}
which ends the proof.
\end{proof}

\section{Continuous dependence of $\s[\mc{P},\mc{B},Q_T]$ with respect to $\G_0$}

\noindent In this section, we establish the continuous dependence of $\sigma[\mathcal{P},\mc{B},Q_T]$ with respect to a canonical perturbation of the domain $\O$ around its Dirichlet boundary $\Gamma_0$. Our result provides with a periodic counterpart of \cite[Th. 8.5]{LG13} for the perturbation
\begin{equation}
\label{iv.1}
\mathcal{O}_n:=\O \cup \Big(\Gamma_0+B_{\frac{1}{n}}(0)\Big)=\O \cup \left\{x \in \R^N\;\; : \;\; \hbox{dist}(x,\Gamma_0)<\tfrac{1}{n}\right\}.
\end{equation}
For sufficiently large $n\geq 1$, say $n\geq n_0$, we consider $a_{ij,n}=a_{ji,n}, b_{j,n}, c_n \in F_n$, $1\leq i, j\leq N$, such that $a_{ij,n}=a_{ij}$, $b_{j,n}=b_j$ and $c_n=c$ in $Q_T$, where we are denoting
\begin{equation}
\label{iv.2}
  F_n:= \left\{u\in \mathcal{C}^{\t,\frac{\t}{2}}(\overline{\mc{O}}_n\times \R;\R)\;: \;\;
  u(\cdot,T+t)=u(\cdot,t) \;\; \hbox{for all}\; t \in\R \right\},
\end{equation}
as well as the associated differential operators
\begin{equation}
\label{iv.3}
  \mathfrak{L}_n\equiv \mathfrak{L}_n(x,t):= -\sum_{i,j=1}^N a_{ij,n}(x,t)\frac{\p^2}
  {\p x_i \p x_j}+\sum_{j=1}^N b_{j,n}(x,t) \frac{\p}{\p x_j}+c_n(x,t),
\end{equation}
the  periodic parabolic operators $\mc{P}_n:=\p_t+\mf{L}_n$, $n\geq n_0$, and the boundary operators
$\mc{B}_n$ defined by
\begin{equation}
\label{iv.4}
\mc{B}_n:= \left\{ \begin{array}{ll} \mc{D} \;\; & \hbox{on} \;\; \Gamma_{0,n}\equiv \partial \mathcal{O}_n \setminus \Gamma_1, \cr \mc{B} \;\; & \hbox{on} \;\; \G_{1,n} \equiv \G_1.\end{array}\right.
\end{equation}
Then, the next result holds.

\begin{theorem}
\label{th41}
Let $\mathcal{O}_n$ be defined by \eqref{iv.1} for all $n\geq n_0$, and set $Q_{T,n}:=\mc{O}_n\times (0,T)$. Then,
\begin{equation}
\label{iv.5}
\lim_{n\to\infty} \sigma[\mathcal{P}_n,\mc{B}_n,Q_{T,n}]=\sigma[\mathcal{P},\mc{B},Q_T].
\end{equation}
\end{theorem}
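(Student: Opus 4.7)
The plan is to combine a monotonicity argument for the eigenvalues $\sigma_n:=\sigma[\mathcal{P}_n,\mathcal{B}_n,Q_{T,n}]$ with a compactness argument for a suitably normalised sequence of principal eigenfunctions, the crux being a uniform barrier near $\Gamma_0$. I write $\sigma:=\sigma[\mathcal{P},\mathcal{B},Q_T]$ throughout.

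First I would establish monotonicity and the upper bound. Since $\mathcal{O}_{n+1}\subset \mathcal{O}_n$, $\Gamma_{0,n+1}\subset \mathcal{O}_n$, and both domains share $\Gamma_1$ with the same boundary operator, the principal eigenfunction $\varphi_n\in E_n$ of $(\mathcal{P}_n,\mathcal{B}_n,Q_{T,n})$ is strictly positive on $\Gamma_{0,n+1}$ and on $\Gamma_0$. Hence its restriction to $Q_{T,n+1}$ (respectively to $Q_T$) is a positive strict supersolution of $(\mathcal{P}_{n+1}-\sigma_n,\mathcal{B}_{n+1},Q_{T,n+1})$ (respectively of $(\mathcal{P}-\sigma_n,\mathcal{B},Q_T)$), the strictness coming from the violated Dirichlet conditions. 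Theorem \ref{th21} then forces $\sigma_n<\sigma_{n+1}$ and $\sigma_n<\sigma$, so $\{\sigma_n\}$ is strictly increasing and bounded above by $\sigma$, converging to some $\sigma_\infty\leq \sigma$.

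Next I would extract a subsequential limit of the eigenfunctions. Normalise $\varphi_n$ at a fixed interior point by $\varphi_n(x_\star,t_\star)=1$ for some $(x_\star,t_\star)\in Q_T$. The uniform bound $|\sigma_n|\leq |\sigma|$, together with a parabolic Harnack chain and interior plus up-to-$\Gamma_1$ Schauder estimates (uniform in $n$ because the coefficients of $\mathcal{P}_n$ coincide with those of $\mathcal{P}$ on $Q_T$ and the boundary operator on $\Gamma_1$ does not depend on $n$), yields uniform $\mathcal{C}^{2+\theta,1+\theta/2}$ control on every compact subset of $(\bar\Omega\setminus \Gamma_0)\times [0,T]$. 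An Arzel\`a--Ascoli diagonal argument then produces a subsequence converging locally in this topology to a non-trivial, $T$-periodic, non-negative $\varphi$ satisfying $(\mathcal{P}-\sigma_\infty)\varphi=0$ in $Q_T$ and $\p_{\nu}\varphi+\beta\varphi=0$ on $\Gamma_1\times [0,T]$.

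The main obstacle is to verify the Dirichlet condition $\varphi=0$ on $\Gamma_0\times [0,T]$ in the limit, since each $\varphi_n$ vanishes only on the shifted boundary $\Gamma_{0,n}$, not on $\Gamma_0$. Using that $\Gamma_0\in \mathcal{C}^{2+\theta}$, let $r(x)$ denote the signed distance to $\Gamma_0$, positive inside $\Omega$, defined in a fixed tubular neighbourhood $\mathcal{T}$; for $n$ large, $\Gamma_{0,n}$ coincides with the level set $\{r=-1/n\}$. I would employ a local parabolic barrier of exponential type, for instance
\[
\Psi_n(x):=K\bigl(1-e^{-A(r(x)+1/n)}\bigr),
\]
which vanishes on $\Gamma_{0,n}$. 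Using the uniform ellipticity of $\mathcal{L}$, the smoothness of $r$, and the uniform bound $\sigma_n\leq \sigma$, the constants $A$ and $K$ can be chosen independently of $n$ so that $(\mathcal{P}_n-\sigma_n)\Psi_n\geq 0$ in $\mathcal{T}\cap Q_{T,n}$ and $\Psi_n\geq \varphi_n$ on the inner boundary of $\mathcal{T}$ (using the uniform $L^\infty$ bound of the previous paragraph). The parabolic comparison principle then gives $0\leq \varphi_n\leq \Psi_n$ in $\mathcal{T}\cap Q_{T,n}$; passing to the limit yields $\varphi(x,t)\leq K(1-e^{-Ar(x)})$ in $\mathcal{T}\cap Q_T$, hence $\varphi=0$ on $\Gamma_0\times [0,T]$. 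The limit $\varphi$ is therefore a non-trivial, non-negative classical solution of $(\mathcal{P}-\sigma_\infty)\varphi=0$, $\mathcal{B}\varphi=0$; uniqueness of the principal eigenvalue of $(\mathcal{P},\mathcal{B},Q_T)$ then forces $\sigma_\infty=\sigma$, establishing \eqref{iv.5}.
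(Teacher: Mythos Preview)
Your argument is correct and follows the same overall scheme as the paper: establish $\sigma_n\uparrow\sigma_\infty\leq\sigma$ by domain monotonicity, extract a subsequential limit $\varphi$ of suitably normalised principal eigenfunctions, and identify $\varphi$ as the principal eigenfunction of $(\mathcal{P},\mathcal{B},Q_T)$, which forces $\sigma_\infty=\sigma$.

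The execution, however, differs in two respects. The paper normalises by $\|\varphi_n\|_{\mathcal{C}^{2+\theta,1+\theta/2}(\bar\Omega\times\mathbb{R})}=1$ and invokes the compact embedding into $\mathcal{C}^{2,1}(\bar\Omega\times\mathbb{R})$ to obtain convergence globally on $\bar\Omega$; it then simply asserts that the limit satisfies $\mathcal{B}\varphi=0$ on $\partial Q_T$ and that $\varphi\gneq 0$. Your choice to normalise at an interior point, combined with the Harnack chain, makes the non-triviality of $\varphi$ transparent. More substantively, your explicit barrier $\Psi_n(x)=K\bigl(1-e^{-A(r(x)+1/n)}\bigr)$ supplies the mechanism by which the limit inherits the Dirichlet condition on $\Gamma_0$: since each $\varphi_n$ is strictly positive on $\Gamma_0$ (which lies in the interior of $\mathcal{O}_n$), the vanishing of $\varphi$ there is not an automatic consequence of the convergence, and the paper does not spell this step out. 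What your route costs in length it gains in self-containment; the paper's proof is shorter but relies on the reader to supply precisely the boundary step you have worked out.
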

\begin{proof}
By Proposition 2.6 of \cite{ALGR}, we have that, for every $n\geq n_0$,
\begin{equation*}
\sigma_n:=\sigma[\mathcal{P},\mathcal{B}_n,Q_{T,n}]< \sigma[\mathcal{P},\mathcal{B}_{n+1},Q_{T,n+1}]<\sigma[\mathcal{P},\mathcal{B},Q_T].
\end{equation*}
Thus, the limit in \eqref{iv.5} exists and it satisfies that
\begin{equation}
\label{iv.6}
\sigma_\infty:=\lim_{n\to\infty} \sigma_n\leq \sigma[\mathcal{P},\mathcal{B},Q_T].
\end{equation}
Thus, it suffices to show that
\begin{equation}
\label{iv.7}
  \sigma_\infty=  \sigma[\mathcal{P},\mathcal{B},Q_T].
\end{equation}
To prove \eqref{iv.7} we can argue as follows. For every $n\in \N$, let $\varphi_n\gg 0$ denote the (unique) principal eigenfunction associated with $\sigma_n$ normalized so that
\begin{equation}
\label{iv.8}
  \|\varphi_n\|_{\mathcal{C}_T^{2+\t,1+\frac{\t}{2}}(\bar \Omega\times \R;\R)}=1,
\end{equation}
where $\mathcal{C}_T^{2+\t,1+\frac{\t}{2}}(\bar \Omega\times \R;\R)$ denotes the set of functions
$u\in \mathcal{C}^{2+\t,1+\frac{\t}{2}}(\bar \Omega\times \R;\R)$ that are $T$-periodic in time.
Note that, since $\bar{\O}\subset \bar{\mathcal{O}}_n$, $\varphi_n\in E$ for all $n\geq n_0$. By the
compactness of the imbedding
$$
   \mathcal{C}_T^{2+\t,1+\frac{\t}{2}}(\bar \Omega\times \R;\R) \hookrightarrow
   \mathcal{C}_T^{2,1}(\bar \Omega\times \R;\R),
$$
we can extract a subsequence of $\v_n$, relabeled by $n\geq n_0$, such that
\begin{equation}
\label{iv.9}
\lim_{n\to \infty} \varphi_n=:\varphi\quad \hbox{in}\;\; \mathcal{C}_T^{2,1}(\bar\O\times \R;\R).
\end{equation}
From the definition of the $\v_n$'s, it follows from \eqref{iv.9} that
\begin{equation*}
  \left\{ \begin{array}{ll}  \mathcal{P}\varphi  = \sigma_\infty \varphi & \quad \hbox{in}\;\; Q_T, \\ \mathcal{B}\varphi =0  & \quad \hbox{on}\;\; \p Q_T.\\  \end{array} \right.
\end{equation*}
Moreover, $\varphi\gneq 0$ in $Q_T$. Therefore, $\varphi$ must be a principal eigenfunction of  $(\mathcal{P},\mathcal{B},Q_T)$. By the uniqueness of the principal eigenvalue, this entails
\eqref{iv.7} and ends the proof.
\end{proof}

\section{Proof of Theorem \ref{th11}}

\noindent First, we will show that \eqref{1.7} is necessary for the existence of a positive solution of \eqref{1.1}. Indeed, the fact that $\Sigma(\l)<0$ has been already established by Theorem \ref{th22}.
 Let $u$ be a positive solution of \eqref{1.1}. Then, also by Theorem \ref{th22},
$u\gg 0$ solves
\begin{equation*}
  \left\{ \begin{array}{ll} \left[\mathcal{P}- \l m(x,t) +a(x,t) f(u)\right]u=0 & \quad \hbox{in}\;\; Q_T, \\[1ex] \mc{B}u =0  & \quad \hbox{on}\;\; \p Q_T,\\  \end{array} \right.
\end{equation*}
and hence, by the uniqueness of the principal eigenvalue,
\begin{equation*}
\sigma[\mathcal{P}-\lambda m+af(u), \mc{B}, Q_T]=0.
\end{equation*}
By the continuity of $f$ and $u$, there exists a constant $\g>0$ such that
\begin{equation*}
f(u(x,t))<\gamma \quad \hbox{for all} \;\;(x,t)\in \bar{Q}_T.
\end{equation*}
Consequently, thanks to Proposition \ref{pr21}(a),  we can infer that
\begin{align*}
0 & =\sigma[\mathcal{P}-\lambda m+af(u), \mc{B}, Q_T]\\ & <\sigma[\mathcal{P}-\lambda m+\gamma a, \mc{B}, Q_T]=\Sigma(\lambda,\gamma)< \Sigma(\lambda, \infty)
\end{align*}
because $a\gneq 0$ in $Q_T$. This shows the necessity of \eqref{1.7}.
\par
Subsequently, we will assume \eqref{1.7}, i.e.,
\begin{equation}
\label{v.1}
\Sigma(\lambda)<0<\Sigma(\lambda,\infty).
\end{equation}
Let $\v\gg 0$ denote the principal eigenfunction associated to $\Sigma(\lambda)$ normalized so that
$\|\varphi\|_{\infty}=1$. The fact that $\Sigma(\lambda)<0$ entails that $\underline u :=\e \v$
is a positive subsolution of \eqref{1.1} for sufficiently small $\e>0$. Indeed, by ($H_f$), we find from \eqref{v.1} that
\begin{equation*}
0<f(\varepsilon)< -\Sigma(\lambda)/\|a\|_{\infty}
\end{equation*}
for sufficiently small $\e>0$. Thus, $\underline{u}:=\varepsilon\v$  satisfies
\begin{equation*}
(\mathcal{P}-\lambda m+af(\underline{u}))\underline{u}=(\Sigma(\lambda)+af(\varepsilon \varphi))\underline{u}\leq (\Sigma(\lambda)+\|a\|_\infty f(\varepsilon))\underline{u}<0 \quad \hbox{in}\;\; Q_T.
\end{equation*}
Moreover,
\begin{equation*}
\mc{B}\,\underline{u}=\varepsilon \mc{B}\,\varphi=0 \quad \hbox{on} \;\; \partial Q_T.
\end{equation*}
Hence, $\underline{u}$ is a positive strict subsolution of \eqref{1.1} for sufficiently small $\e>0$.
\par
To establish the existence of arbitrarily large supersolutions we proceed as follows. Consider, for sufficiently large $n \in \N$,
\begin{equation*}
\mathcal{O}_n:=\O \cup \left\{x \in \R^N\;\; : \;\; \hbox{dist}(x,\Gamma_0)<\tfrac{1}{n}\right\}.
\end{equation*}
For sufficiently large $n$, say $n\geq n_0$, $\G_1$ is a common part of $\p\O$ and of $\p\mc{O}_n$.
Subsequently, we consider
\begin{equation}
\label{v.2}
\Gamma_{0,n}:=\partial \mathcal{O}_n\setminus \Gamma_1.
\end{equation}
By construction,
$$
  \hbox{dist}(\Gamma_{0,n},\G_0)=\tfrac{1}{n} \quad \hbox{for all}\;\; n\geq n_0.
$$
Now, much like in Section 4, for every $n\geq n_0$, we consider $\mf{L}_n$ and  $\mc{B}_n$ satisfying
\eqref{iv.3} and \eqref{iv.4}, as well as the parabolic operator $\mc{P}_n:=\p_t+\mf{L}_n$. Then, by Theorem \ref{th41}, we have that
\begin{equation}
\label{v.3}
\lim_{n\to \infty}\sigma[\mathcal{P}_n-\lambda m_n+\gamma a_n, \mc{B}_n, \mathcal{O}_{n}\times (0,T)] =\Sigma(\lambda,\gamma),
\end{equation}
where $m_n$ and $a_n$ are continuous periodic extensions of $m$ and $a$ to $Q_{T,n}$.
\par
Since $\Sigma(\lambda,\infty)>0$, by Theorem \ref{th31}, we have that $\Sigma(\lambda,\gamma)>0$ for sufficiently large $\g$, say $\g >\g_0>0$. Subsequently, we fix one of those $\gamma$'s. According to \eqref{v.3}, we can enlarge $n_0$ so that
\begin{equation}
\label{v.4}
\Sigma_n(\l,\g)\equiv \sigma[\mathcal{P}_n-\lambda m_n+\gamma a_n, \mc{B}_n, \mathcal{O}_{n}\times (0,T)]>0 \quad
\hbox{for all} \;\; n\geq n_0.
\end{equation}
Pick $n\geq n_0$ and let $\psi_n\gg 0$ be any positive eigenfunction associated to $\Sigma_n(\l,\g)$. We claim that  $\overline{u}:=\kappa \psi_n$ is a positive strict supersolution of \eqref{1.1} for sufficiently large $\kappa>1$.  Note that
\begin{equation}
\label{v.5}
\psi_n(x,t)>0 \quad \hbox{for all} \;\; (x,t)\in \bar{Q}_T.
\end{equation}
Indeed, $\psi_n(x,t)>0$ for all $(x,t)\in \mc{O}_n\times [0,T]$. Thus, $\psi_n(x,t)>0$
for all $x\in \O\cup\G_0$ and $t\in [0,T]$. Consequently, since $\psi_n(x,t)>0$ for all $x\in \G_1$ and
$t\in [0,T]$, \eqref{v.5} holds true. Thanks to \eqref{v.5}, we have that, for every $\kappa>0$,
\begin{equation*}
\mc{B}\, \overline{u}(x,t)=\overline{u}(x,t)>0 \quad  \hbox{for all}\;\; (x,t) \in \Gamma_0\times [0,T].
\end{equation*}
Moreover, by construction, $\mc{B}\, \overline{u}=0$ on $\Gamma_1\times [0,T]$. Therefore, $\mc{B}\,\overline{u}\geq 0$ on $\p\Omega\times [0,T]$.
\par
On the other hand, by \eqref{v.4} and \eqref{v.5}, we have that
\begin{equation}
\label{v.6}
(\mathcal{P}_n-\lambda m_n)\psi_n=\Sigma_n(\l,\g) \psi_n- \gamma a_n \psi_n > -\gamma a_n \psi_n \quad \hbox{in} \;\; Q_T.
\end{equation}
By construction,  $\mc{P}_n=\mc{P}$, $a_n=a$ and $m_n=m$ in $Q_T$. Moreover,
by \eqref{v.5}, there exists a constant $\mu>0$ such that $\psi_n>\mu$ in $\bar{Q}_T$.
Thus, owing to ($H_f$), it follows from \eqref{v.6} that
\begin{equation}
\label{v.7}
(\mathcal{P}-\lambda m+af(\overline{u}))\overline{u}> a(f(\kappa \psi_n)-\gamma)\overline{u}\geq a(f(\kappa\mu)-\gamma)\overline{u}>0 \quad \hbox{in} \;\; Q_T
\end{equation}
for sufficiently large $\kappa>0$. Therefore, $\overline{u}$ is a positive strict supersolution of \eqref{1.1} for sufficiently large $\kappa>0$.
\par
Finally, either shortening $\varepsilon>0$, or enlarging $\kappa>0$, if necessary, one can assume that
\begin{equation*}
0\ll \underline{u}=\e\v  \leq \varepsilon \leq \kappa \mu  \leq  \overline{u}=\k \psi_n.
\end{equation*}
Consequently, by adapting the argument of Amann \cite{Amann} to a periodic-parabolic context (see, e.g., Hess \cite{Hess}, or Daners and Koch-Medina \cite{DK}), it becomes apparent that \eqref{1.1} admits a positive solution. This shows the existence of a positive solution for \eqref{1.1}. The uniqueness is already known by Theorem \ref{th22}.

\section{Proof of Theorem \ref{th14}}

\subsection{Preliminaries}

\noindent Throughout this section,  according to \cite[Def. 3.2]{LG20}, for any given connected open subset $G$ of $\R^N\times \R$, a point $(x,t)\in\p G$ is said to belong to the \emph{flat top boundary} of $G$, denoted by $\p_{FT}G$, if there exists $\e>0$ such that
\begin{itemize}
\item $B_\e(x)\times \{t\}\subset \p G$, where $B_\e(x):=\{y\in\R^N:|y-x|<\e\}$.
\item $B_\e(x)\times (t-\e,t)\subset G$.
\end{itemize}
Then, the \emph{lateral boundary} of $G$, denoted by $\p_{L}G$, is defined by
$$
  \p_{L}G =\partial G\setminus \p_{FT} G.
$$
The next generalized version of the parabolic weak maximum principle holds; it is \cite[Th. 3.4]{LG20}.

\begin{theorem}
\label{th61}
Let $G$ be a connected open subset of $\R^N\times \R$ whose flat top boundary $\p_{FT}G$ is nonempty and consists of finitely many components with disjoint closures, and suppose $c=0$, i.e.,
$\mf{L}$ does not have zero order terms. Let $u \in \mc{C}(\bar G)\cap
\mc{C}^2(G)$ be such that
$$
 \mc{P}u= \p_t u + \mf{L}u\leq 0 \quad \hbox{in}\;\; G.
$$
Then,
$$
  \max_{\bar G}u = \max_{\p_{L}G}u.
$$
\end{theorem}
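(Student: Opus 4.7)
My approach is the classical one for parabolic weak maximum principles, adapted to the present geometry: reduce to a strict differential inequality via a time shift, exclude interior maxima by the second-derivative test, and exclude flat-top maxima by a quadratic-barrier perturbation combined with the standard parabolic weak maximum principle on a backward cylinder sitting inside $G$. For the reduction, set $v(x,t) := u(x,t) - \eta t$ with $\eta > 0$; since $c = 0$, one has $\mc{P}v = \mc{P}u - \eta \leq -\eta < 0$ strictly in $G$. It suffices to show $\max_{\bar G} v \leq \max_{\p_L G} v$ for each such $\eta$ and then let $\eta \to 0^+$.

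Arguing by contradiction, suppose $M := \max_{\bar G} v$ is attained at some $(x_0,t_0) \in \bar G \setminus \p_L G = G \cup \p_{FT} G$. If $(x_0,t_0) \in G$, then at this interior local maximum $\p_t v = 0$, $\nabla_x v = 0$, and the spatial Hessian is negative semidefinite; uniform ellipticity of $(a_{ij})$ forces $\sum a_{ij}\p_{ij} v \leq 0$ at $(x_0,t_0)$, and with $c=0$ this gives $\mc{P}v(x_0,t_0) \geq 0$, contradicting $\mc{P}v \leq -\eta$. So $(x_0,t_0) \in \p_{FT} G$, and the definition of flat top supplies $\e > 0$ with $B_\e(x_0) \times \{t_0\} \subset \p G$ and $B_\e(x_0) \times (t_0 - \e, t_0) \subset G$. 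Set $C := B_r(x_0) \times (t_0 - r, t_0)$ for $r \in (0, \e)$ to be chosen; then $C \subset G$, $\bar C \subset \bar G$, and the top face $\bar B_r(x_0) \times \{t_0\}$ lies in $\p G$.

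Introduce the quadratic-barrier perturbation $\tilde v(x,t) := v(x,t) - \delta |x - x_0|^2$ with $\delta > 0$. A direct computation using $c = 0$ yields
\[
\mc{P}(|x - x_0|^2) = -2 \sum_i a_{ii}(x,t) + 2\, b(x,t) \cdot (x - x_0),
\]
and by uniform ellipticity (which gives $\sum_i a_{ii} \geq N \mu$) together with boundedness of $b$, this quantity is bounded above by $-N\mu < 0$ on $\bar C$ once $r$ is sufficiently small. Consequently $\mc{P}\tilde v = \mc{P}v - \delta\, \mc{P}(|x-x_0|^2) \leq -\eta + \delta \|\mc{P}(|x-x_0|^2)\|_{L^\infty(\bar C)}$, which is strictly negative if $\delta$ is chosen small relative to $\eta$. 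Moreover $(x_0,t_0)$ is a strict global maximum of $\tilde v$ on $\bar C$: for $x \neq x_0$, $\tilde v(x,t) < v(x,t) \leq M$; for $x = x_0$ and $t < t_0$, the point $(x_0, t)$ is interior to $G$, so the interior-maximum argument above rules out $v(x_0, t) = M$, whence $\tilde v(x_0, t) = v(x_0, t) < M$. Applying the classical weak parabolic maximum principle to $\tilde v$ on $\bar C$ yields $\max_{\bar C} \tilde v = \max_{\p_p C} \tilde v$; since $(x_0, t_0) \notin \p_p C$ and $\tilde v < M$ on the compact set $\p_p C$, we obtain $\max_{\p_p C} \tilde v < M = \max_{\bar C} \tilde v$, the desired contradiction.

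The main obstacle is the coordinated selection of the radius $r$ and the barrier strength $\delta$: one must first shrink $r$ so that $\mc{P}(|x - x_0|^2) < 0$ uniformly on $\bar C$, which is where uniform ellipticity and the hypothesis $c = 0$ enter essentially, and only then choose $\delta$ small enough to preserve the strict sign of $\mc{P}\tilde v$. The hypothesis that $\p_{FT} G$ has finitely many components with pairwise disjoint closures plays no role in the local maximum-principle argument above.
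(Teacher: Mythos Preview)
Your argument is correct. The paper does not give its own proof of this statement: it quotes the result verbatim as \cite[Th.~3.4]{LG20} and uses it as a black box, so there is no ``paper's own proof'' to compare against. Your route---perturb by $-\eta t$ to get a strict inequality, rule out interior maxima by the first- and second-order conditions, and rule out flat-top maxima by a quadratic spatial barrier plus the classical weak maximum principle on a backward cylinder $B_r(x_0)\times(t_0-r,t_0)\subset G$---is the standard textbook mechanism and goes through cleanly here.

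Two minor remarks. First, the step where you shrink $r$ so that $\mc{P}(|x-x_0|^2)<0$ on $\bar C$ is superfluous: all you use downstream is the bound $\mc{P}\tilde v\le -\eta+\delta\|\mc{P}(|x-x_0|^2)\|_{L^\infty(\bar C)}$, which is negative for $\delta$ small regardless of the sign of $\mc{P}(|x-x_0|^2)$. Second, your closing observation that the finiteness/disjoint-closures hypothesis on the components of $\p_{FT}G$ is not used in your local argument is accurate; that hypothesis is presumably there to guarantee a clean decomposition $\p G=\p_{FT}G\cup\p_L G$ in the original reference, but the pointwise contradiction you run does not need it.
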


Subsequently, for every $\e\geq 0$, we set
$$
  [a\geq \e]:=\{(x,t)\in\bar Q_T\;:\; a(x,t)\geq \e\}=a^{-1}([\e,\infty)),
$$
and denote by $\v_\g\gg 0$ the unique principal eigenfunction associated to
$\Sigma(\l,\g)$ such that $\|\v_\g\|_{L^\infty(Q_T)}=1$. Then, by \cite[Th. 5.1]{LG20}, the next result holds.

\begin{theorem}
\label{th62}
Assume ($H_\Omega$), ($H_\mf{L}$), ($H_\mc{B}$), ($H_{a}$),
\begin{equation}
\label{6.1}
  a(x,t)>0 \quad \hbox{for all}\;\; (x,t)\in\p\O\times [0,T],
\end{equation}
and $\Sigma(\l,\infty)<\infty$ for some $\l\in\R$. Then,
\begin{equation}
\label{6.2}
\lim_{\g\ua \infty}\v_\g =0
\end{equation}
uniformly on compact subsets of $(\O\times [0,T])\cap[a>0]$.
\end{theorem}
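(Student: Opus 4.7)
My plan is to establish the uniform convergence via a local parabolic barrier argument on small cylinders around each point of $K$, using the large penalty coefficient $\gamma a$ to force exponential decay of $\varphi_\gamma$. The hypothesis $\Sigma(\lambda,\infty)<\infty$ combined with Theorem \ref{th31} yields the uniform bound $\Sigma(\lambda,\gamma)\leq\Sigma(\lambda,\infty)$ for all $\gamma\geq 0$. Extending $\varphi_\gamma$ and all coefficients $T$-periodically to $\bar\Omega\times\mathbb{R}$ (legitimate since they belong to $E$ and $F$), the eigenvalue identity reads $(\partial_t+\mathfrak{L})\varphi_\gamma=(\Sigma(\lambda,\gamma)+\lambda m-\gamma a)\varphi_\gamma$. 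Because $K\subset(\Omega\times[0,T])\cap[a>0]$ is compact, there exist $\eta,r,\tau>0$, independent of the base point $(x_0,t_0)\in K$, such that the closed parabolic cylinder $\bar Q(x_0,t_0):=\bar B_r(x_0)\times[t_0-\tau,t_0]$ lies in $\Omega\times\mathbb{R}$ and satisfies $a\geq\eta$ throughout. Setting $M:=\Sigma(\lambda,\infty)+|\lambda|\|m\|_\infty$ and $V_\gamma:=\gamma\eta-M$, the identity then implies
$$
(\partial_t+\mathfrak{L}+V_\gamma)\varphi_\gamma\leq 0\;\;\text{in }Q(x_0,t_0),\qquad\varphi_\gamma\leq 1\;\;\text{on }\partial_pQ(x_0,t_0),
$$
with $V_\gamma\geq\gamma\eta/2$ for all large $\gamma$.

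The core is to construct a supersolution $\bar w_\gamma$ of $\mathcal{P}_\gamma w:=(\partial_t+\mathfrak{L}+V_\gamma)w\geq 0$ on $Q(x_0,t_0)$ dominating $\varphi_\gamma$ on $\partial_pQ(x_0,t_0)$ and tending to $0$ at the apex. I propose the split
$$
\bar w_\gamma(x,t):=e^{-\beta_\gamma(r^2-|x-x_0|^2)}+e^{-\kappa_\gamma(t-t_0+\tau)}
$$
with $\kappa_\gamma:=V_\gamma/2$ and $\beta_\gamma:=\sqrt{V_\gamma/(8M_ar^2)}$, where $M_a$ upper-bounds the matrix $(a_{ij})$. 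The temporal piece is trivially a supersolution, since $\mathcal{P}_\gamma e^{-\kappa_\gamma(t-t_0+\tau)}=(V_\gamma-\kappa_\gamma+c)e^{-\kappa_\gamma(t-t_0+\tau)}\geq 0$ for $\gamma$ large. For the spatial Gaussian, a direct derivative computation gives (with $\rho=|x-x_0|$)
$$
\mathfrak{L}\,e^{-\beta(r^2-\rho^2)}\geq -\bigl(4\beta^2M_a\rho^2+2\beta NM_a+2\beta\|b\|_\infty r+\|c\|_\infty\bigr)e^{-\beta(r^2-\rho^2)},
$$
and the choice $\beta_\gamma=\sqrt{V_\gamma/(8M_ar^2)}$ makes the $O(\beta^2)$ cost equal to $V_\gamma/2$, so the remaining $O(\sqrt{V_\gamma})$ lower-order contributions are absorbed by the leftover $V_\gamma/2$ for large $\gamma$, giving $\mathcal{P}_\gamma \bar w_\gamma^{(1)}\geq 0$. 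The boundary data work out: on $|x-x_0|=r$ the spatial piece equals $1$, and on $t=t_0-\tau$ the temporal piece equals $1$; hence $\bar w_\gamma\geq 1\geq\varphi_\gamma$ on $\partial_pQ(x_0,t_0)$, while
$$
\bar w_\gamma(x_0,t_0)=e^{-\beta_\gamma r^2}+e^{-\kappa_\gamma\tau}\longrightarrow 0\;\;\text{as}\;\;\gamma\uparrow\infty,
$$
with a rate depending only on $r,\tau,\eta,M_a$, and therefore uniform in $(x_0,t_0)\in K$.

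Because $\mathcal{P}_\gamma(\bar w_\gamma-\varphi_\gamma)\geq 0$ on $Q(x_0,t_0)$, $\bar w_\gamma-\varphi_\gamma\geq 0$ on $\partial_pQ(x_0,t_0)$, and the effective zero-order coefficient $c+V_\gamma$ is eventually nonnegative, the parabolic weak comparison principle---either Theorem \ref{th61} after the standard substitution $w\mapsto e^{-(\|c\|_\infty+V_\gamma)t}w$ that eliminates the zero-order term, or the classical comparison for parabolic operators with nonnegative potential---forces $\varphi_\gamma\leq\bar w_\gamma$ throughout $\bar Q(x_0,t_0)$. Evaluating at $(x_0,t_0)$ and taking the supremum over $K$ yields
$$
\sup_{K}\varphi_\gamma\leq e^{-\beta_\gamma r^2}+e^{-\kappa_\gamma\tau}\to 0,
$$
which is the claimed uniform convergence.

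The principal obstacle is calibrating $\beta_\gamma$ in the spatial barrier: the exponential $e^{-\beta_\gamma(r^2-\rho^2)}$ must vanish at the apex as $\gamma\uparrow\infty$ (forcing $\beta_\gamma r^2\to\infty$) while its destabilizing second-order cost $-4\beta_\gamma^2(\sum a_{ij}x_ix_j)$ in $\mathfrak{L}\bar w_\gamma^{(1)}$ must not exceed $V_\gamma$. The scaling $\beta_\gamma=\Theta(\sqrt{V_\gamma})$ is essentially optimal: larger $\beta_\gamma$ destroys the supersolution inequality, smaller eliminates the decay. A secondary technicality is the $T$-periodic extension past $t=0,T$, which is harmless because $\varphi_\gamma\in E$ and all coefficients lie in $F$; the uniform choice of $r,\tau,\eta$ on the compact set $K$ then automatically upgrades the pointwise decay at each apex to uniform decay on $K$.
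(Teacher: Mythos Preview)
The paper does not prove Theorem~\ref{th62}: it is imported verbatim from \cite[Th.~5.1]{LG20} and used as a black box in Section~6. So there is no in-paper proof to compare with.

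Your barrier argument is correct and self-contained. The essential mechanism---that on a parabolic cylinder contained in $[a\geq\eta]$ the eigenfunction $\varphi_\gamma$ is a subsolution of $\partial_t+\mathfrak{L}+V_\gamma$ with $V_\gamma\sim\gamma\eta\to\infty$, and that a split barrier $e^{-\beta_\gamma(r^2-\rho^2)}+e^{-\kappa_\gamma(t-t_0+\tau)}$ with $\beta_\gamma=\Theta(\sqrt{V_\gamma})$, $\kappa_\gamma=\Theta(V_\gamma)$ dominates $\varphi_\gamma$ on the parabolic boundary while vanishing at the apex---is sound, and the calibration $4\beta_\gamma^2 M_a r^2=V_\gamma/2$ leaving room for the $O(\sqrt{V_\gamma})$ lower-order terms is verified correctly. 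The periodic extension in $t$ and the uniform choice of $r,\tau,\eta$ by compactness of $K$ are handled properly.

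One small inaccuracy: the substitution $w\mapsto e^{-(\|c\|_\infty+V_\gamma)t}w$ does \emph{not} eliminate the zero-order term (it shifts it by a constant, leaving $c+\|c\|_\infty+2V_\gamma$ or $c-\|c\|_\infty$ depending on the direction), so Theorem~\ref{th61} is not literally applicable that way. This is harmless, because the alternative you name---the classical weak comparison principle for $\partial_t+\mathfrak{L}+V_\gamma$ with nonnegative zero-order coefficient $c+V_\gamma\geq 0$---is exactly what is needed and applies directly.

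Your approach is likely more elementary and more quantitative than the original in \cite{LG20}: judging from the tools deployed around it in Section~6 (Aronson--Serrin interior compactness, parabolic Harnack), the cited proof probably proceeds by extracting a weak limit of $\varphi_\gamma$ and arguing it must vanish on $[a>0]$, whereas your explicit barrier yields the concrete rate $\sup_K\varphi_\gamma\leq e^{-c_1\sqrt{\gamma}}+e^{-c_2\gamma}$.
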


\subsection{Proof of Theorem \ref{th13}}

Note that, besides ($H_\Omega$), ($H_\mf{L}$), ($H_\mc{B}$), ($H_{a}$), we are assuming \eqref{6.1}, or, equivalently, \eqref{1.9}. Our proof will proceed by contradiction. So, assume that $\Sigma(\tilde \l,\infty)<\infty$ for some $\tilde \l\in\R$. Then, by Theorem \ref{th31},
\begin{equation}
\label{6.3}
  \Sigma(\l,\infty)<\infty \quad \hbox{for all}\;\;\l\in\R.
\end{equation}
Fix $\l\in\R$, and assume that $c-\l m\geq 0$ in $Q_T$. Once completed the proof in this case, we will show how to conclude the proof when $c-\l m$ changes of sign.
\par
For sufficiently small $\e>0$, let $G_\e$ be an open set such that
$$
  \p_{L} G_\e \subset [a>0] \quad \hbox{and}\quad t = t_Q-\e \quad \hbox{if}\;\;
  (x,t)\in \p_{FT} G_\e,
$$
as illustrated in the first picture of Figure \ref{Fig6}. In this figure, as in all subsequent pictures
of this paper, we are representing the nodal behavior of the weight function $a(x,t)$ in the parabolic cylinder $Q_{\tau}=\O\times [0,\tau]$ for some $\tau \in (t_P,T)$.
\par

Since $\p_{L} G_\e$ is a compact subset of the open set $[a>0]$,  Theorem \ref{th62} implies that
\begin{equation}
\label{6.4}
  \lim_{\g\ua \infty}\v_\g =0 \quad \hbox{uniformly on}\;\; \p_{L} G_\e.
\end{equation}
Moreover, by the definition of $\v_\g$, we have that, for every $\g\geq 0$,
$$
  \p_t \v_\g +(\mf{L}-c)\v_\g =(-c+\l m-\g a +\Sigma(\l,\g))\v_\g \leq \Sigma(\l,\g)\v_\g,
$$
because $-c+\l m\leq 0$ and $a\geq 0$. Thus, the auxiliary function
\begin{equation}
\label{6.5}
  \psi_\g:= e^{-t \Sigma(\l,\g)} \v_\g
\end{equation}
satisfies
\begin{align*}
\p_t \psi_\g +(\mf{L}-c)\psi_\g & = -\Sigma(\l,\g) e^{-t \Sigma(\l,\g)} \v_\g  +
e^{-t \Sigma(\l,\g)} \p_t \v_\g+ e^{-t \Sigma(\l,\g)} (\mf{L}-c) \v_\g  \\ & =
e^{-t \Sigma(\l,\g)} \left[- \Sigma(\l,\g)\v_\g  + \p_t \v_\g +(\mf{L}-c) \v_\g\right] \leq 0.
\end{align*}
Hence, it follows from Theorem \ref{th61} that, for every $\g\geq 0$,
\begin{equation}
\label{6.6}
  \max_{\bar G_\e}\psi_\g = \max_{\p_{L} G_\e}\psi_\g.
\end{equation}
On the other hand, by \eqref{6.3},
\begin{equation}
\label{vi.7}
   \Sigma(\l,\g)\leq \Sigma(\l,\infty)<\infty.
\end{equation}
Thus, \eqref{6.4} and \eqref{6.5} imply that
\begin{equation*}
    \lim_{\g\ua \infty}\psi_\g =0 \quad \hbox{uniformly on}\;\; \p_{L} G_\e.
\end{equation*}
Consequently, according to \eqref{6.6}, we actually have that
\begin{equation*}
    \lim_{\g\ua \infty}\psi_\g =0 \quad \hbox{uniformly in}\;\; \bar G_\e.
\end{equation*}
Equivalently, by \eqref{6.5} and \eqref{vi.7},
\begin{equation*}
    \lim_{\g\ua \infty}\v_\g =0 \quad \hbox{uniformly in}\;\; \bar G_\e.
\end{equation*}
As this holds regardless the size of $\e>0$, we find that
\begin{equation}
\label{6.8}
  \lim_{\g\ua\infty}\v_\g = 0 \quad \hbox{in}\;\; G_0:=\cup_{\e>0} G_\e,
\end{equation}
uniformly in $\bar G_\e$ for all $\e>0$.

\begin{figure}[h!]
\centering
\includegraphics[scale=1]{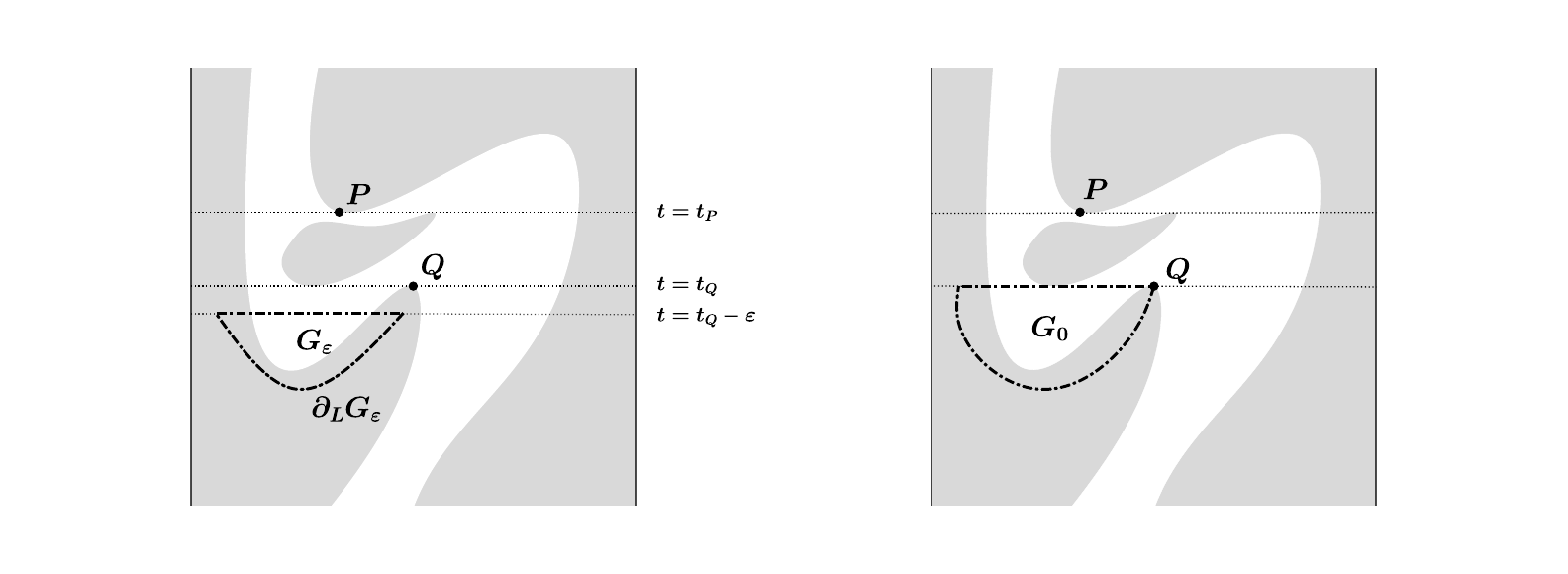}
\caption{The construction of the open set $G_0$. }
\label{Fig6}
\end{figure}

To show the validity of \eqref{6.8} at the level $t=t_Q$, one needs an additional argument. Consider any closed ball, $\bar B$, such that
$$
  \bar B\subset \mathrm{int\,}a^{-1}(0),\quad B\cap [t>t_Q]\neq \emptyset,\quad
  B\cap [t<t_Q]\neq \emptyset,
$$
as illustrated in the first picture of Figure \ref{Fig7}, where the ball has been centered at a
point $R=(x_R,t_R)$ with $t_R=t_Q$. Since $\Sigma(\l,\infty)<\infty$, in the interior of $a^{-1}(0)$ one can combine the theorem of Eberlein--Schmulian (see, e.g., \cite[Th. 3.8]{LG13}) with Theorem 4 of Aronson and Serrin \cite{ArSe} to show the existence of a sequence $\{\g_n\}_{n\geq 1}$ such that
$$
  \lim_{n\to \infty}\g_n=\infty\quad\hbox{and}\quad \lim_{n\to\infty} \v_{\g_n}=\v_\infty
  \quad \hbox{uniformly in}\;\;\bar B
$$
for some smooth function $\v_\infty \in \mc{C}^{2,1}(\bar B)$. Since
\begin{equation*}
 \bar B\cap [t<t_Q]\subset G_0,
\end{equation*}
we can infer from \eqref{6.8} that $\v_\infty(x,t)=0$ if $(x,t)\in \bar B$ with $t\leq t_Q$. Thus, there is a region $G$, like the one represented in the second picture of Figure \ref{Fig7}, such that
\begin{equation}
\label{6.9}
  \lim_{\g\ua \infty}\v_{\g}=0\quad \hbox{uniformly on}\;\;\p_{L} G.
\end{equation}

\par
\begin{figure}[h!]
\centering
\includegraphics[scale=1.3]{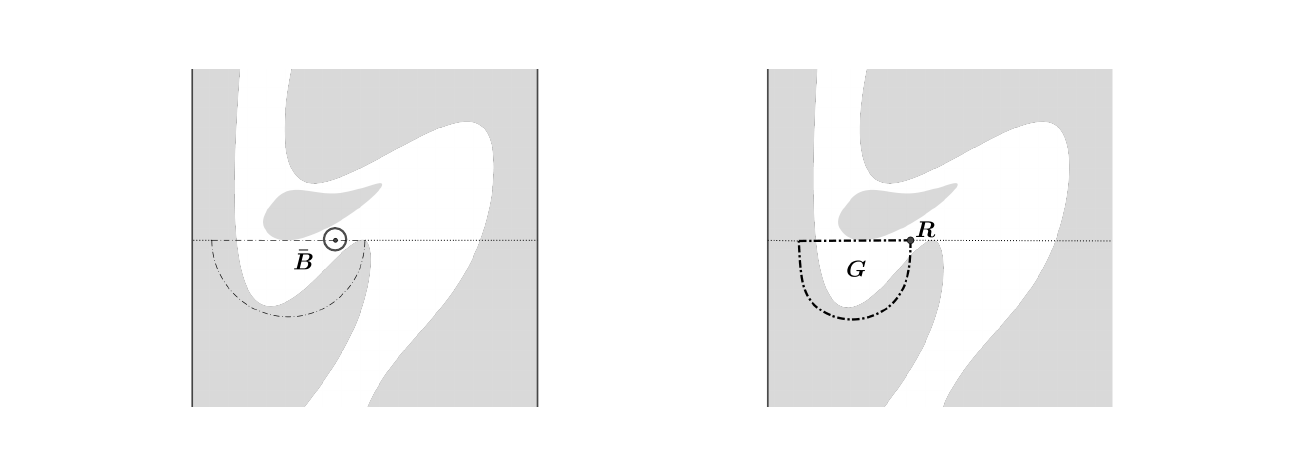}
\caption{The construction of the open set $G$. }
\label{Fig7}
\end{figure}

Consequently, applying again the Theorem \ref{th62} to $\psi_\g$, we are also driven to
\begin{equation}
\label{6.10}
   \lim_{\g\ua \infty}\v_{\g}=0\quad \hbox{uniformly on}\;\;\bar G.
\end{equation}
Next, for sufficiently small $\e>0$, we consider any open set $H_\e$ such that
$$
  t=t_P-\e\quad \hbox{if}\;\; (x,t)\in \p_{FT} H_\e,
$$
\begin{equation}
\label{6.11}
  \bar H_\e \cap [t\leq t_Q] \subset \bar G,
\end{equation}
and
\begin{equation}
\label{6.12}
   \p_{L} H_\e \cap (t_Q,t_P-\e]\subset [a>0]
\end{equation}
as illustrated by the first picture of Figure \ref{Fig8}, where $S=(x_S,t_S)$ stands for any point in $M$ with $t_S=t_Q$; $H_\e$ exists by our structural assumptions. According to \eqref{6.10} and
\eqref{6.11}, we have that
\begin{equation}
\label{6.13}
   \lim_{\g\ua \infty}\v_{\g}=0\quad \hbox{uniformly on}\;\;\bar H_\e \cap[t\leq t_S].
\end{equation}
Moreover, by Theorem \ref{th62}, it follows from \eqref{6.12} that,
for arbitrarily small $\eta>0$,
\begin{equation}
\label{6.14}
    \lim_{\g\ua \infty}\v_{\g}=0\quad \hbox{uniformly on}\;\;  \p_{L} H_\e  \cap [t_S+\eta,t_P-\e].
\end{equation}
Note that, thanks to \eqref{6.13} and \eqref{6.14}, we have that
\begin{equation}
\label{6.15}
  \lim_{\g\ua\infty}\v_\g =0 \quad \hbox{point-wise on} \;\; \p_{L} H_\e
\end{equation}
and uniformly on $\p_{L} H_\e \setminus B_\d(S)$ for sufficiently small $\d>0$, where $B_\d(S)$
stands for the open ball centered at $S$ with radio $\d>0$. Naturally, by our assumptions, the function $\psi_\g$ defined in  \eqref{6.5} also approximates
zero point-wise on  $\p_{L} H_\e$ and uniformly on $\p_\mathrm{L} H_\e \setminus B_\d(S)$
for sufficiently small $\d>0$. Moreover, since
$$
   \p_t \psi_\g +(\mf{L}-c)\psi_\g \leq 0\quad \hbox{in}\;\; Q_T,
$$
$\psi_\g$ provides us with a positive subsolution of the parabolic operator
$\p_t+\mf{L}-c$. Thus, by the parabolic Harnack inequality, there exists a constant $C>0$ such that
\begin{equation}
\label{vi.16}
  \max_{\p_{L} H_\e}\psi_\g \leq C \min_{\p_{L} H_\e}\psi_\g.
\end{equation}
Hence,
$$
  \lim_{\g\ua \infty} \max_{\p_{L} H_\e}\psi_\g=0
$$
and therefore,
$$
  \lim_{\g\ua \infty} \max_{\p_{L} H_\e}\v_\g =\lim_{\g\ua \infty} \max_{\p_{L} H_\e}\psi_\g=0.
$$
Consequently, once again by Theorem \ref{th61}, one can infer that
$$
  \lim_{\g\ua \infty} \v_\g =0 \quad \hbox{uniformly on}\;\; \bar H_\e
$$
for sufficiently small $\e>0$.
\par
Now, consider the open set $H_0$ defined through
$$
  H_0:=\cup_{\e>0} H_\e,
$$
as sketched in the second picture of Figure \ref{Fig8}.

\begin{figure}[h!]
\centering
\includegraphics[scale=1]{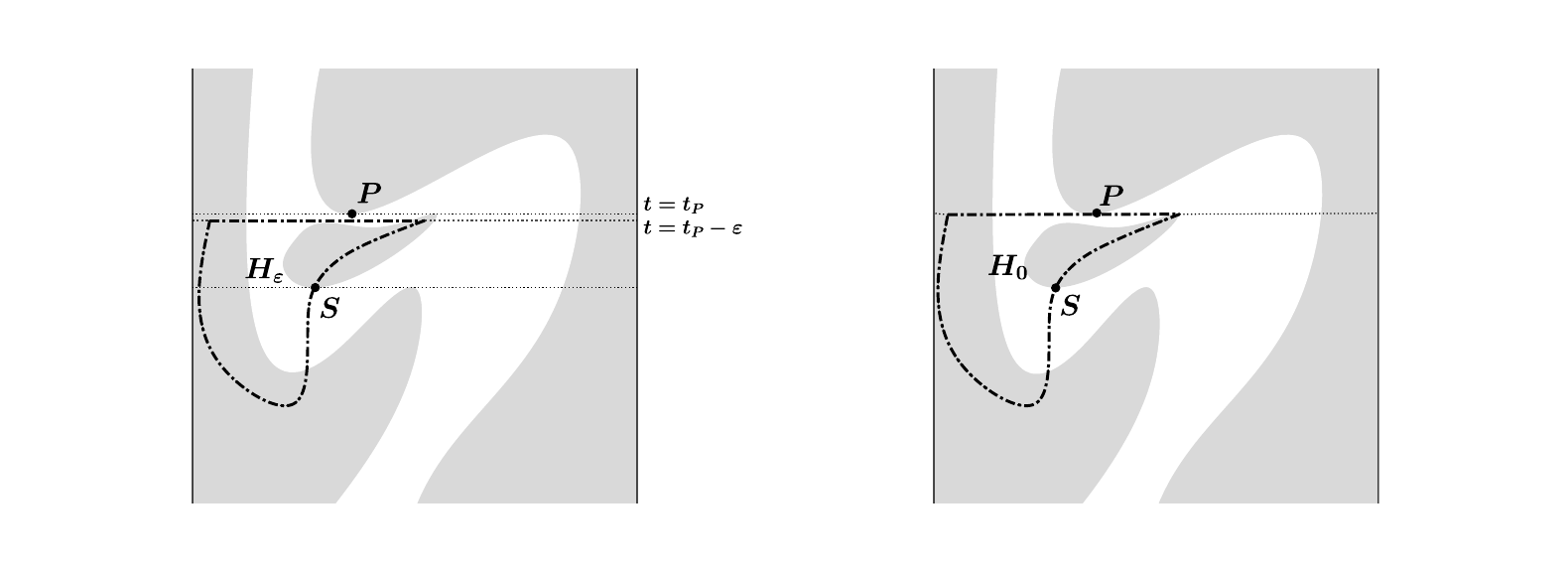}
\caption{The construction of the open set $H_0$. }
\label{Fig8}
\end{figure}

Proceeding with $H_0$ as we previously did with
$G_0$,  it becomes apparent that one can construct an open subset $H\subset H_0$, like the one sketched in the first picture of Figure \ref{Fig9}, such that
\begin{equation*}
  \lim_{\g\ua \infty}\v_{\g}=0\quad \hbox{uniformly on}\;\;\p_{L} H.
\end{equation*}
Therefore, by Theorem \ref{th61},
\begin{equation}
\label{vi.17}
  \lim_{\g\ua \infty}\v_{\g}=0\quad \hbox{uniformly on}\;\;\bar H.
\end{equation}

\begin{figure}[h!]
\centering
\includegraphics[scale=1]{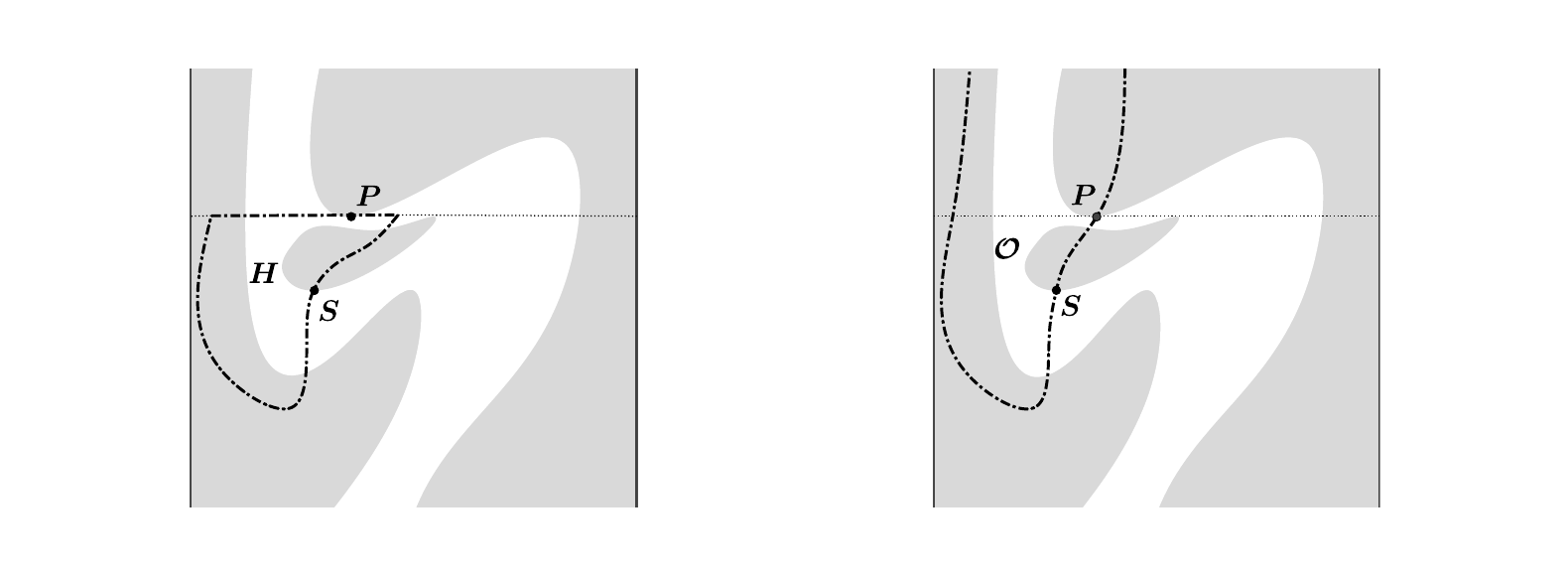}
\caption{The construction of the open set $\mc{O}$. }
\label{Fig9}
\end{figure}

Finally, considering an open subset $\mc{O}$ of $Q_T$ of the type described in the second picture
of Figure \ref{Fig9}, with
$$
  \bar{\mc{O}}\cap [t\leq t_P]\subset \bar H\quad \hbox{and}\quad
  \p_{L}\mc{O}\cap [t_P<t\leq T]\subset [a>0],
$$
at the light of the analysis already done in this proof it is apparent that
$$
   \lim_{\g\ua\infty}\v_\g =0\quad \hbox{uniformly in}\;\; \bar{\mc{O}}.
$$
By Theorem \ref{th62}, this implies that
$$
 \lim_{\g\ua\infty}\v_\g(x,T) =0\quad \hbox{uniformly in}\;\; \bar{\O}.
$$
Thus, since $\v(x,t)$ is $T$-periodic,
$$
 \lim_{\g\ua\infty}\v_\g(x,0) =0\quad \hbox{uniformly in}\;\; \bar{\O},
$$
which implies
$$
  \lim_{\g\ua \infty}\v_\g =0 \quad \hbox{uniformly on}\;\;\bar Q_T.
$$
This is impossible, because $\|\v_\g\|_{L^\infty(Q_T)}=1$. This contradiction shows that
\begin{equation}
\label{vi.18}
  \Sigma(\l,\infty)=\infty.
\end{equation}
In the general case when $c-\l m $ changes of sign, we can pick a sufficiently large $\o>0$ such that
$$
  c-\l m +\o \geq 0 \qquad \hbox{in}\;\;\bar{Q}_T.
$$
Then, by the result that we have just proven,
$$
   \lim_{\g\ua \infty} \sigma[\mathcal{P}-\lambda m+\o+\gamma a, \mc{B}, Q_T]=
   \Sigma(\l,\infty)+\o =\infty.
$$
Therefore, \eqref{vi.18} also holds in this case. This ends the proof of Theorem \ref{th13}.

\subsection{Proof of Theorem \ref{th14}} Assume that $n=2$ and that $\Sigma(\tilde\l,\infty)<\infty$
for some $\tilde \l\in\R$. Then, by Theorem \ref{th31}, \eqref{6.3} holds. Arguing as in the
proof of Theorem \ref{th13}, we fix $\l\in\R$ and assume $c-\l m\geq 0$ in $Q_T$. The general case
follows with the final argument of the proof of Theorem \ref{th13}.
\par
Repeating the argument of the proof of Theorem \ref{th13}, one can construct an open subset, $H$, of $Q_T$, with $H\subset [t\leq t_{P_2}]$, as illustrated in the first picture of Figure \ref{Fig10}, satisfying \eqref{vi.17}.

\begin{figure}[h!]
\centering
\includegraphics[scale=1]{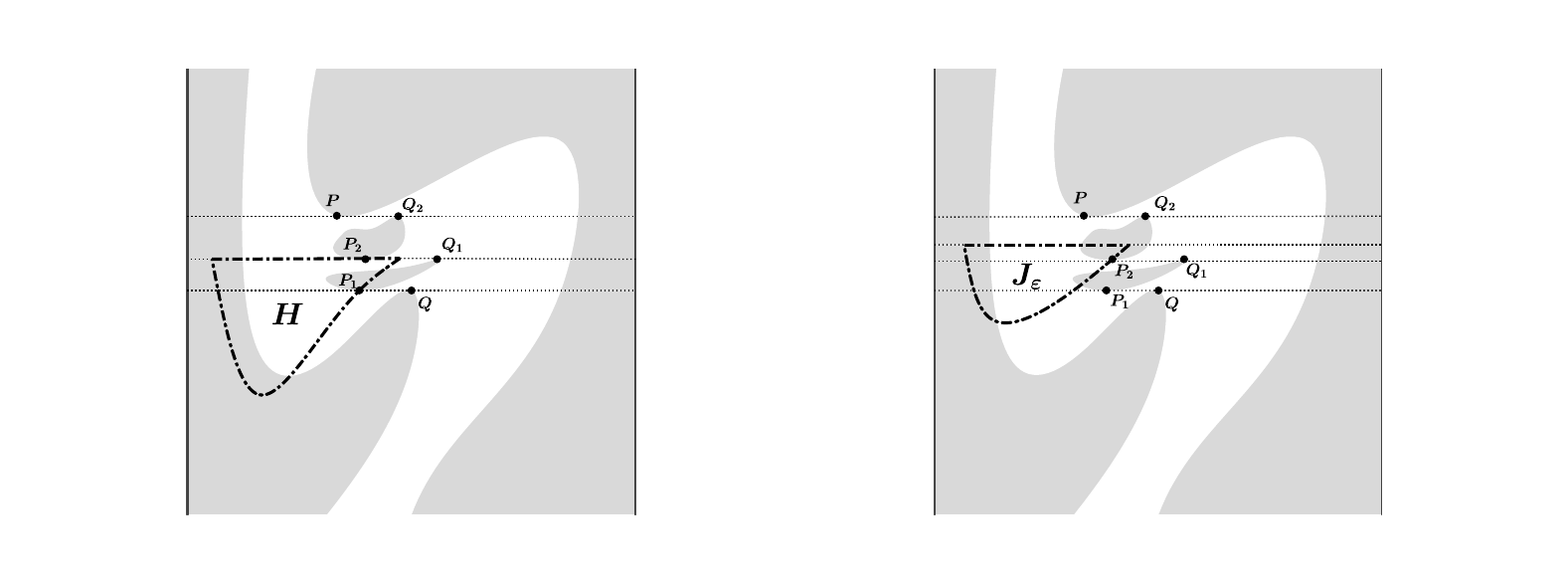}
\caption{The construction of the open sets $J_\e$, $\e>0$. }
\label{Fig10}
\end{figure}

Arguing as in the proof of Theorem \ref{th13}, for sufficiently small $\e>0$, there exists an open set
$J_\e$ such that
\begin{equation}
\label{vi.19}
t=t_P-\e=t_{Q_2}-\e\quad \hbox{if}\;\; (x,t)\in \p_{FT} J_\e,\qquad
  \bar J_\e \cap [t\leq t_{P_2}] \subset \bar H,
\end{equation}
and
\begin{equation}
\label{vi.20}
   \p_{L} J_\e \cap (t_{P_2},t_P-\e]\subset [a>0]
\end{equation}
like the one shown in the second picture of Figure \ref{Fig10}; $J_\e$ exists by our structural assumptions. By \eqref{vi.17}, \eqref{vi.19} and \eqref{vi.20}, we find that
\begin{equation}
\label{6.20}
   \lim_{\g\ua \infty}\v_{\g}=0\quad \hbox{uniformly on}\;\;\bar J_\e \cap[t\leq t_{P_2}].
\end{equation}
Moreover, as in the proof of Theorem \ref{th13}, it is apparent that
\begin{equation}
\label{6.21}
  \lim_{\g\ua\infty}\v_\g =0 \quad \hbox{point-wise on} \;\; \p_{L} J_\e
\end{equation}
and uniformly on $\p_{L} J_\e \setminus B_\d(P_2)$ for sufficiently small $\d>0$. By our assumptions, the function $\psi_\g$ defined in \eqref{6.5} also approximates
zero point-wise on  $\p_{L} J_\e$ and uniformly on $\p_{L} J_\e \setminus B_\d(P_2)$
for arbitrarily small $\d>0$, and
$$
   \p_t \psi_\g +(\mf{L}-c)\psi_\g \leq 0\quad \hbox{in}\;\; Q_T.
$$
Therefore, arguing as in the proof of Theorem \ref{th13}, it follows from the parabolic Harnack inequality that
$$
  \lim_{\g\ua \infty} \v_\g =0 \quad \hbox{uniformly on}\;\; \bar J_\e.
$$
Next, we consider the open set $J_0$ defined by
$$
  J_0:=\cup_{\e>0} J_\e,
$$
which has been represented in the first picture of Figure \ref{Fig11}. Arguing with $J_0$ as we did with
$G_0$ in the proof of Theorem \ref{th13},  it becomes apparent that there exists an open subset $J\subset J_0$,  such that
\begin{equation*}
  \lim_{\g\ua \infty}\v_{\g}=0\quad \hbox{uniformly on}\;\;\p_{L} J
\end{equation*}
and $J=\mathcal{O}\cap[t\leq t_P]$ (see the right picture of Figure \ref{Fig11}), where
$\mc{O}$ is a prolongation of $J$ up to $t=T$ with
$$
  \p_{L}(\mc{O}\setminus J)\subset [a>0].
$$

\begin{figure}[h!]
\centering
\includegraphics[scale=1]{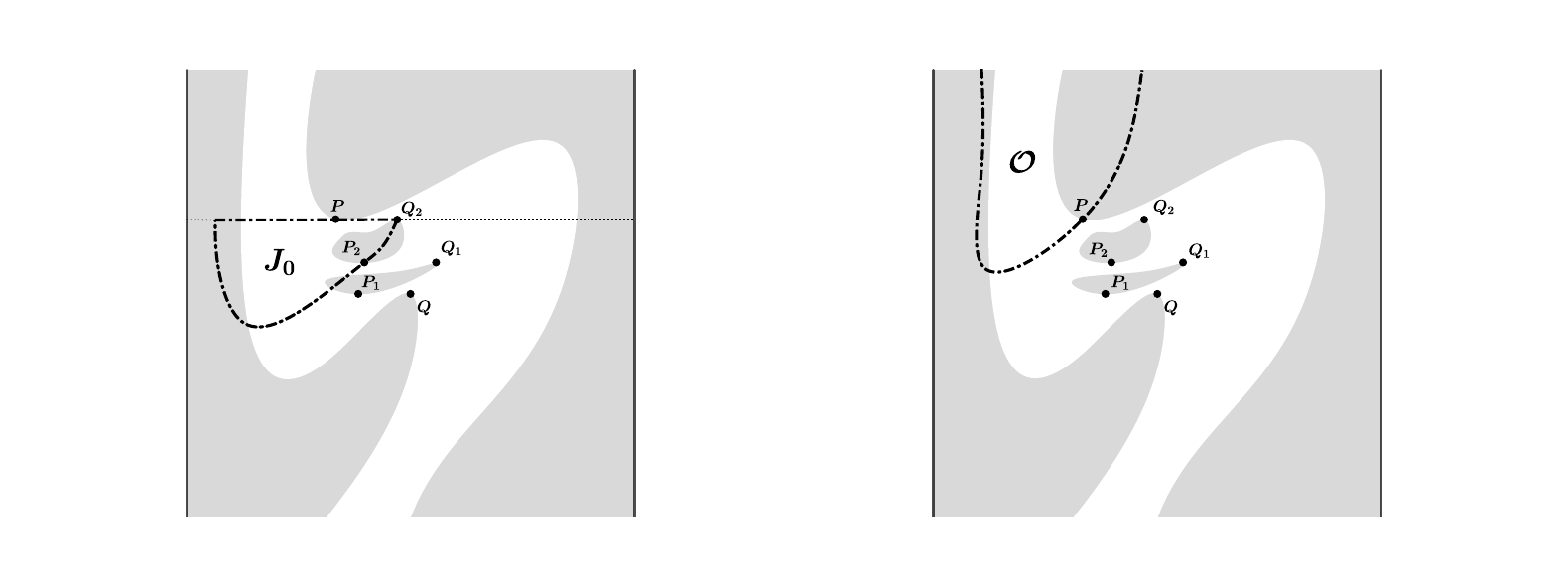}
\caption{The construction of $J_0$, $J$ and $\mc{O}$. }
\label{Fig11}
\end{figure}

Repeating the last steps of the proof of Theorem \ref{th13}, the proof of Theorem
\ref{th14} for $n=2$ follows. The proof in the general case $n\geq 2$ follows by induction,
but omit its technical details here by repetitive.

\end{document}